\setlist[itemize]{parsep=2.5pt}
\setlist[enumerate]{parsep=2.5pt}
\numberwithin{equation}{section} \theoremstyle{plain}
\newtheorem{theorem}{Theorem}[section]
\newtheorem{corollary}[theorem]{Corollary}
\newtheorem{definition}[theorem]{Definition}
\newtheorem{lemma}[theorem]{Lemma}
\newtheorem{proposition}[theorem]{Proposition}
\theoremstyle{remark}
\newtheorem{remark}[theorem]{Remark}
\theoremstyle{remark}
\numberwithin{equation}{section}
\numberwithin{equation}{section}
\def\bC{\mathbb{C}}
\def\bN{\mathbb{N}}
\def\R{\mathbb{R}}
\def\E{\mathbb{E}}
\def\bP{\mathbb{P}}
\def\cH{\mathcal{H}}
\def\cB{\mathcal{B}}
\def\cF{\mathcal{F}}
\def\bB{\mathbf B}
\def\1{\mathbf 1}
\def\e{\varepsilon}
\begin{document}

\title[Hitting probabilities of Gaussian fields]{Hitting probabilities of Gaussian random fields and collision of eigenvalues of random matrices}

\author[C. Lee]{Cheuk Yin Lee}
\address{Institut de math\'ematiques, \'Ecole polytechnique f\'ed\'erale de Lausanne,
Station 8, CH-1015 Lausanne, Switzerland }
\email{cheuk.lee@epfl.ch}

\author[J. Song]{Jian Song} 
\address{Research Center for Mathematics and Interdisciplinary Sciences, Shandong University, Qingdao, Shandong, 266237, China;
    and School of Mathematics, Shandong University, Jinan, Shandong, 250100, China}
    \email{txjsong@sdu.edu.cn}
    
    \author[Y. Xiao]{Yimin Xiao}
    \address{Department of Statistics and Probability, Michigan State University, A-413 Wells Hall, East Lansing,
MI 48824, U.S.A.}
\email{xiaoy@msu.edu}

\author[W. Yuan]{Wangjun Yuan}
\address{Department of Mathematics and Statistics,  University of Ottawa, Canada}
\email{ywangjun@connect.hku.hk}

%
%
%
%
%

\subjclass[2010]{60B20, ~60G15,~60G22}

\keywords{Gaussian random fields, fractional Brownian motion, hitting probabilities, Gaussian orthogonal ensemble, 
Gaussian unitary ensemble, eigenvalues}

\date{}

\begin{abstract}  
 Let $X= \{X(t), t \in \R^N\}$ be a centered Gaussian random field with values in $\R^d$ satisfying certain conditions and let
$F \subset \R^d$ be a Borel set.  In our main theorem, we provide a sufficient condition for $F$ to 
be polar for $X$, i.e. $\mathbb P\big( X(t) \in F \hbox{ for some }  t \in \R^N\big) = 0$, 
which improves significantly the main result in Dalang et al 
\cite{dalang2017polarity}, where the case of $F$ being a singleton was considered. 
 We provide a variety of  examples of Gaussian random field for which  our result is applicable. Moreover, by using our main theorem, 
 we solve a problem on the existence of collisions of the eigenvalues of 
random matrices with Gaussian random field entries that was left open in Jaramillo and Nualart 
\cite{jaramillo2020collision} and Song et al \cite{song2021collision}.
\end{abstract}

\maketitle

{
\hypersetup{linkcolor=black}
 \tableofcontents 
}

\section{Introduction}

This paper is motivated by a problem on the existence of collision of the eigenvalues of a random matrix
with Gaussian random field entries that has been left open by Jaramillo and Nualart \cite{jaramillo2020collision}   
and Song et al \cite{song2021collision}. We start by describing briefly some history and existing results on the aforementioned problem.
 In the celebrated work \cite{Dyson1962}, Dyson introduced independent Ornstein-Uhlenbeck processes to a Hermitian matrix as its entries and showed that the system of eigenvalue processes models the so-called time-dependent Coulomb gas. Later on, it was shown that the eigenvalue processes never collide almost surely (see, e.g., \cite{rs93}).	For a symmetric matrix with independent Brownian motion entries, its eigenvalues do not collide for almost all trajectories and satisfy a system of the It\^o stochastic differential equations with
non-smooth diffusion coefficients. The process formed by the ordered eigenvalues of the symmetric Brownian motion matrix  
is now known as Dyson's non-colliding Brownian motion (see, 
e.g., \cite{mckean, rs93, anderson2010} for more information). If the  matrix entries are fractional Brownian motions 
with Hurst parameter $H$,   Nualart and P\'erez-Abreu \cite{nualart2014eigenvalue} proved that, when 
$H\in(\frac12,1)$, the eigenvalue processes do not collide by using the stochastic calculus with respect to Young 
integrals.  When $H\in(0, \frac12)$, Jaramillo and Nualart \cite{jaramillo2020collision} identified the collision 
probability of the eigenvalues with the hitting probability of Gaussian fields and, as a consequence, obtained 
a sufficient condition and a necessary condition for the positivity of the collision probability. The result of  
\cite{jaramillo2020collision} was recently extended to the collision probability of multiple eigenvalues  by 
Song et al \cite{song2021collision} who also obtained the Hausdorff dimension of the set of collision times.  
The methodology based on hitting probabilities used in \cite{jaramillo2020collision,song2021collision}, 
which first appeared in McKean \cite[Section 4.9]{mckean} in the proof of the non-collision property for Dyson's 
Brownian motion, can deal with the collision problem for the eigenvalues of random matrices with  
more general Gaussian random field entries including fractional Brownian motion with multidimensional 
indices and the Brownian sheet. 
One of the key ingredients in \cite{jaramillo2020collision,song2021collision} is the result in \cite{BLX2009} on  
hitting probability of Gaussian random fields. While \cite{BLX2009} is useful for determining whether $k$ 
eigenvalues of a real symmetric random matrix with Gaussian random field entries may collide or not for 
the cases $\sum_{j=1}^N\frac1{H_j} > (k+2)(k-1)/2$ and $\sum_{j=1}^N\frac1{H_j} < (k+2)(k-1)/2$ under 
the setting of \cite{jaramillo2020collision,song2021collision} (see Section \ref{sec:rm} below), 
it does not provide any useful information when $\sum_{j=1}^N\frac1{H_j} = (k+2)(k-1)/2$, which is referred to 
as the critical dimension case for the collision problem. In this case, the problem on the existence of collision 
of the eigenvalues of a real symmetric (or complex Hermitian) random matrix with Gaussian random field 
entries has been left open by Jaramillo and Nualart \cite{jaramillo2020collision} and Song et al \cite{song2021collision}.  


In this paper, we solve this problem by first establishing a hitting probability result that is stronger than that in 
\cite{BLX2009} for a large class of Gaussian random fields. More specifically, let $X= \{X(t), t \in \R^N\}$ be 
a centered Gaussian random field with values in $\R^d$ (for brevity, $X$ is called an $(N, d)$-random field) 
that satisfies the general  assumptions in Dalang et al \cite{dalang2017polarity}.  We derive in Theorem 
\ref{Th:main} a sufficient condition for a Borel set $F \subset \R^d$ to be polar for $X$, i.e.,
$\mathbb P\big( X(t) \in F \hbox{ for some }  t \in \R^N\big) = 0$ in terms of a condition related to the upper
Minkowski dimension of $F$. This theorem improves significantly Theorem 2.6 in Dalang et al \cite{dalang2017polarity}, 
where the case of $F$ being a singleton was considered, and is applicable to solutions of stochastic partial 
differential equations (SPDEs). 


The method for proving Theorem \ref{Th:main} is based on a refined covering argument.  Compared with   
\cite{BLX2009} and other related references for hitting probabilities of Gaussian random fields and 
solutions to SPDEs such as \cite{dalang2007hitting, dalang2009hitting, dalang2013hitting, 
dalang2010criteria, dalang2015hitting, hinojosa2021anisotropic, Xiao2009sample},  the method 
for  constructing the covering sets in this paper is significantly different. 
In \cite{BLX2009} and the other references, the authors covered the inverse image $\{t \in I: X(t) \in F\}$, 
where $I \subset \R^N$ is a compact interval, by balls whose sizes are determined by $F$ and the largest 
global oscillation of $X$ on $I$. Consequently, these coverings are quite coarse and the covering argument 
fails if the dimension of $F$ is critical for the polarity problem for $X$ (e.g., $\dim F = d - \frac N H$ when $X$ is 
an $(N, d)$-fractional Brownian motion of index $H$). In the present paper, we construct a random covering 
for $\{t \in I: X(t) \in F\}$ by using balls whose sizes match the smallest local oscillation of $X$ with very large 
probability, see Proposition \ref{Prop:X} below and the proof of Theorem \ref{Th:main}. 
Our covering argument is originated from Talagrand \cite{Talagrand95, Talagrand98} and extends the method 
in \cite{dalang2017polarity}. 



The rest of this paper is organized as follows.  In Section \ref{sec:hitting}, we study the hitting probability of Gaussian 
random fields under the general setting of Dalang et al \cite{dalang2017polarity}. The main result is Theorem \ref{Th:main}, 
which provides a sufficient condition related to the upper Minkowski dimension for a Borel set $F \subset \R^d$ to be polar 
for $X$. In Section \ref{sec:examples}, we give some examples of Gaussian random fields that satisfy the conditions 
imposed in Section \ref{sec:hitting}. In particular, we show that  the solutions of the systems of linear stochastic heat and 
wave equations with a Gaussian noise that is white in time and colored in space satisfy the conditions of Theorem \ref{Th:main}. 
This allows us to strengthen the results in \cite{dalang2017polarity} and prove the polarity of a class of sets with critical 
dimension for the solutions of these SPDEs. 
In Section \ref{sec:rm}, we apply the main result Theorem \ref{Th:main} to study the collision problem for the eigenvalues 
of random matrices with Gaussian random field entries and prove that there is no collision of $k$ eigenvalues 
of the real symmetric random matrices when $\sum_{j=1}^N\frac1{H_j} = (k+2)(k-1)/2$. This solves a problem that 
was left open in \cite{jaramillo2020collision, song2021collision}.

\section{Hitting probabilities in critical dimension}\label{sec:hitting}

Let $X:=\{X(t) = (X_1(t), \ldots, X_d(t)), t\in \R^N\}$ be a centered continuous $\R^d$-valued Gaussian random field 
defined on a probability space $(\Omega, \cF, \bP)$. In this section, we study the hitting probabilities of $X$ in 
critical dimension in a general setting of Dalang et al \cite{dalang2017polarity}. 

First we recall the following definition of an $\R^d$-valued Gaussian noise on $\R_+$.
\begin{definition} \label{Def-noise}
	Let $\nu$ be a Borel measure on $\R_+$, and let $A \mapsto W(A)$ be a set function defined on $\cB (\R_+)$ 
	with values in $L^2(\Omega, \cF, \bP; \R^d)$ such that for each $A$, $W(A)$ is a centered normal random vector with 
	values in $\R^d$ and covariance matrix $\nu(A) I_d$. Assume that $W(A \cup B) = W(A) + W(B)$, and $W(A)$ 
	and $W(B)$ are independent whenever $A \cap B = \emptyset$. Then the set function $A \mapsto W(A)$ is called
	an $\R^d$-valued Gaussian noise with control measure $\nu$. 
\end{definition} 

As in \cite{dalang2017polarity}, we assume that the component processes $X_1, \ldots, X_d$ of the $(N, d)$-random field $X$ 
are i.i.d.\footnote{While the independence of the component processes of $X$ plays an important role in this paper, 
the condition for them to be identically distributed can be relaxed, see Section 4 for an example.}  
For finite constants $c_j < d_j$ ($j=1, \dots, N$),   let 
\[
I := \prod_{j=1}^N [c_j,d_j]
\] 
be a compact interval in $\R^N$. Denote $I^{(\epsilon)} := \prod_{j=1}^N (c_j-\epsilon,d_j+\epsilon)$. 

We impose the following assumption on the Gaussian random field $X$, which is the same as \cite[Assumption 2.1]{dalang2017polarity}.
\begin{enumerate}
\item[{\bf  (A1)}] There is a Gaussian random field $\{W(A,t): A \in \cB(\R_+), t \in \R^N\}$ and $\epsilon_0>0$ 
satisfying the following two conditions. 
\begin{enumerate}
\item[(a1)] For all $t \in I^{(\epsilon_0)}$, $A \mapsto W(A,t)$ is an $\R^d$-valued Gaussian noise with a control 
measure $\nu_t$ such that $W(\R_+,t) = X(t)$ and  when $A \cap B = \emptyset$, $W(A,\cdot)$ and $W(B,\cdot)$ are 
independent.
	
\item[(a2)] There exist constants $a_0 \ge 0$, $c_0 > 0$, 
$\gamma_j > 0$, $j = 1, \dots, N$, such that for all $a_0 \le a < b
 \le +\infty$ and all $s:=(s_1,\dots, s_N),t:=(t_1, \dots, t_N) \in I^{(\epsilon_0)}$,
		\begin{align*}
			\big\| W([a,b),s) - X(s) - W([a,b),t) + X(t) \big\|_{L^2}
			\le c_0 \left[ \sum_{j=1}^N a^{\gamma_j} |s_j - t_j| + b^{-1} \right],
		\end{align*}
		and
		\begin{align*}
			\big\| W([0,a_0),s) - W([0,a_0),t) \big\|_{L^2}
		\le c_0 \sum_{j=1}^N |s_j - t_j|,
		\end{align*}
		 where $\|Y\|_{L^2}:=\left(\E[|Y|^2]\right)^{1/2}=\left(\E[ Y_1^2+\dots+Y_d^2 ]\right)^{1/2}$  for a random vector 
		$Y:=(Y_1, \dots, Y_d)$.

	\end{enumerate}
\end{enumerate}

Denote
	\begin{equation}\label{e:alpha}
	\alpha_j = (1+\gamma_j)^{-1}, \ \  1 \le j \le N.
	\end{equation}
	   These parameters are useful for characterizing various properties of  the random field $X$. Define a metric on $\R^N$ by
\begin{align}\label{e:Delta}
	\Delta(s,t) = \sum_{j=1}^N |s_j - t_j|^{\alpha_j}, ~ s, t\in \R^N.
\end{align}
The following lemma is from \cite[Proposition 2.2]{dalang2017polarity}. It enables us to 
bound the canonical metric on $I^{(\epsilon_0)}$ induced by $\|X(s)-X(t)\|_{L^2}$ by using the metric $\Delta$. 
 
\begin{lemma}\label{Lem:distance}
	Under Assumption {\bf (A1)}, for all $s,t \in I^{(\epsilon_0)}$ with $\Delta(s,t) \le \min \{a_0^{-1}, 1\}$, we have
	\begin{align*}
		\big\| X(s) - X(t) \big\|_{L^2}
		\le 4c_0 \Delta(s,t).
	\end{align*}
\end{lemma}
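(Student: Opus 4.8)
The plan is to split $X$ through the underlying Gaussian noise $W$ at a single scale tuned to $\Delta(s,t)$, exploiting that by (a1) the noise $W$ takes independent values on disjoint subsets of $\R_+$. Fix $s,t\in I^{(\epsilon_0)}$; I may assume $s\neq t$, so $r:=\Delta(s,t)>0$, and I first treat the case $r<\min\{a_0^{-1},1\}$. Put the cutoff $T:=r^{-1}$, so that $T>a_0$. By Definition \ref{Def-noise} and (a1), $X(u)=W([0,T),u)+W([T,\infty),u)$ for $u\in I^{(\epsilon_0)}$, hence
\[
X(s)-X(t)=U+V,\qquad U:=W([0,T),s)-W([0,T),t),\quad V:=W([T,\infty),s)-W([T,\infty),t),
\]
and since $[0,T)\cap[T,\infty)=\emptyset$, the vectors $U$ and $V$ are independent and centered, so $\|X(s)-X(t)\|_{L^2}^2=\|U\|_{L^2}^2+\|V\|_{L^2}^2$; it thus suffices to bound $\|U\|_{L^2}$ and $\|V\|_{L^2}$ separately.

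For $\|U\|_{L^2}$ I would use the first inequality in (a2) with $a=T$, $b=+\infty$: since $W([T,\infty),u)-X(u)=-W([0,T),u)$, its left-hand side equals $\|U\|_{L^2}$, giving $\|U\|_{L^2}\le c_0\sum_{j=1}^N T^{\gamma_j}|s_j-t_j|$. The bound on $\|V\|_{L^2}$ is the step that forces the independence structure: apply the first inequality in (a2) with $a=a_0$, $b=T$ (allowed since $a_0<T$) and substitute $W([a_0,T),u)-X(u)=-W([0,a_0),u)-W([T,\infty),u)$; the left-hand side becomes $\big\|[W([0,a_0),s)-W([0,a_0),t)]+V\big\|_{L^2}$, the $L^2$-norm of a sum of two independent centered vectors, so it dominates $\|V\|_{L^2}$. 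Hence $\|V\|_{L^2}\le c_0\big[\sum_{j=1}^N a_0^{\gamma_j}|s_j-t_j|+T^{-1}\big]$.

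It then remains to do the elementary exponent bookkeeping via \eqref{e:alpha}--\eqref{e:Delta}: writing $u_j:=|s_j-t_j|^{\alpha_j}$ one has $\sum_j u_j=r$, $u_j\le r$, and $|s_j-t_j|=u_j^{1+\gamma_j}$, so $T^{\gamma_j}|s_j-t_j|=u_j(u_j/r)^{\gamma_j}\le u_j$ (as $\gamma_j>0$), whence $\sum_j T^{\gamma_j}|s_j-t_j|\le r$; since $a_0\le T$ and $\gamma_j>0$ the same gives $\sum_j a_0^{\gamma_j}|s_j-t_j|\le r$, while $T^{-1}=r$. Therefore $\|U\|_{L^2}\le c_0 r$, $\|V\|_{L^2}\le 2c_0 r$, and $\|X(s)-X(t)\|_{L^2}\le\sqrt{5}\,c_0 r\le 4c_0\Delta(s,t)$. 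This already shows $X$ is $L^2$-continuous on $I^{(\epsilon_0)}$, so both sides of the claim are continuous in $(s,t)$, and as $\{(s,t):\Delta(s,t)<\min\{a_0^{-1},1\}\}$ is dense in $\{(s,t):\Delta(s,t)\le\min\{a_0^{-1},1\}\}$ (approach a boundary point along the segment from $s$ to $t$), the inequality extends to the boundary case. The hard part is exactly the estimate for $\|V\|_{L^2}$, i.e.\ for the tail increment $\|W([T,\infty),s)-W([T,\infty),t)\|_{L^2}$: assumption (a2) controls only the double increments $W([a,b),s)-X(s)-W([a,b),t)+X(t)$ and never a single variance, and the key point is that after the substitution $W([a,b),\cdot)-X(\cdot)=-W([0,a),\cdot)-W([b,\infty),\cdot)$ that left-hand side is the norm of a sum of two independent centered pieces, one of which is the wanted tail increment; the only residual nuisance, the degenerate value $\Delta(s,t)=a_0^{-1}$ where $T=a_0$, is absorbed into the density remark.
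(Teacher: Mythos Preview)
The paper does not supply its own proof of this lemma; it simply quotes \cite[Proposition 2.2]{dalang2017polarity}. Your argument is correct and follows the natural scheme of cutting the noise at the scale $T=\Delta(s,t)^{-1}$, with one small twist worth pointing out. To control the tail increment $\|V\|_{L^2}$ you do \emph{not} use the second inequality in (a2); instead you observe that the left side of the first (a2) inequality with $a=a_0$, $b=T$ is the $L^2$-norm of the independent centered sum $[W([0,a_0),s)-W([0,a_0),t)]+V$, and simply drop the first summand. The more standard route is to separate these two pieces by the triangle inequality and then invoke the second (a2) bound $\|W([0,a_0),s)-W([0,a_0),t)\|_{L^2}\le c_0\sum_j|s_j-t_j|\le c_0 r$; that gives $\|V\|_{L^2}\le 3c_0 r$ and, after one more triangle inequality for $U+V$, exactly $\|X(s)-X(t)\|_{L^2}\le 4c_0\Delta(s,t)$, which is presumably where the constant $4$ in the statement comes from. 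Your independence trick yields the sharper $\sqrt{5}\,c_0$ and dispenses with the second inequality in (a2) altogether. The closing density remark to handle the boundary value $\Delta(s,t)=a_0^{-1}$ is sound.
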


 We further impose the following two assumptions on $X$, which are aslo stated in \cite[Assumption~2.4]{dalang2017polarity}. 
\begin{enumerate}
	\item[{\bf (A2)}] There exists a constant $d_0 > 0$, such that $\|X_i(t)\|_{L^2} \ge d_0$ for all $t \in I^{(\epsilon_0)}$ 
	and all $1 \le i \le N$.
	
	\item[{\bf (A3)}] There exists a constant ${\rho_0} > 0$ with the following property. For $t \in I$, there exist
	$t' = t'(t) \in I^{(\epsilon_0)}$, $\delta_j = \delta_j(t) \in (\alpha_j,1]$ for $1 \le i \le N$ (recalling that $\alpha_j$'s are 
	given in \eqref{e:alpha}), and $C = C(t) > 0$, such that
	\begin{align*}
		\Big| \E \left[ X_i(t') \big( X_i(s) - X_i(\bar{s}) \big) \right] \Big|
		\le C \sum_{j=1}^N |s_j - \bar{s}_j|^{\delta_j},
	\end{align*}
	for all $1 \le i \le N$ and all $s, \bar{s} \in I^{(\epsilon_0)}$ with $\max\{\Delta(t,s), \Delta(t,\bar{s})\} \le 2 {\rho_0}$.
\end{enumerate}
Notice that {\bf (A2)} is a non-degeneracy condition on $X$ and that {\bf (A3)} is a regularity condition  which yields 
better path regularity of $X$ than Lemma \ref{Lem:distance}.

We now introduce the following important parameter:
	\begin{align}\label{e:Q}
 Q = \sum_{j=1}^N \dfrac{1}{\alpha_j}.
	\end{align}
It follows from \cite{BLX2009,Xiao2009sample} that if Assumption {\bf (A1)} holds,  $d \ge Q$, and $F\subset \R^d$ 
has $(d-Q)$-dimensional Hausdorff measure 0, then $F$ is polar for $X$. However, if the $(d-Q)$-dimensional 
Hausdorff measure of $F$ is not 0 (this is always the case if $d = Q$ and $F \ne \emptyset$), it is in general not 
known whether $F$ is polar for $X$ or not. The special case of  $F= \{x\}$ when $d=Q$ was solved by Dalang et 
al \cite{dalang2017polarity}. (For completeness, we mention that if $d < Q$, then for every $x \in \R^d$, $X^{-1}(x) \ne 
\emptyset$ with positive probability, see \cite[Theorem 7.1]{Xiao2009sample}.)

The following is the main result of this section which provides a sufficient condition on  $F\subset \R^d$ such that    
$X^{-1}(F) \cap I=\emptyset$ a.s. This result improves Theorem 2.6 in Dalang et al \cite{dalang2017polarity} and 
the results in \cite{BLX2009,Xiao2009sample}. For a general Gaussian random field $X$  (except the Brownian motion 
and  the Brownian sheet which were completely solved by Kakutani  \cite{kakutani1944} and by Khoshnevisan and 
Shi \cite{khoshnevisan1999brownian}, respectively) the condition 
(\ref{Con:F}) on $F$ is the weakest general condition so far for the polarity of $F$.


\begin{theorem} \label{Th:main}
Let Assumptions {\bf (A1)}-{\bf (A3)} hold and suppose $d\ge Q$, where $Q$ is given in~\eqref{e:Q}. Let  
$F\subset \R^d$ be a bounded set that satisfies the following 
condition: There exist  constants $\theta \in[0, d- Q]$, $C_F \in (0, \infty)$,  and  $\kappa \in[0, (d-\theta)/Q)$ such that 
\begin{equation}\label{Con:F}
\lambda_d(F^{(r)})\le C_F r^{d-\theta} \Big( \log \log (1/r)\Big)^\kappa
\end{equation}
for all $r>0$ small, where $\lambda_d$ is the Lebesgue measure on $\R^d$ and 
\[
F^{(r)}=\Big\{x\in \R^d: \inf_{y\in F} |x-y|\le r\Big\}
\] 
is the (closed) $r$-neighborhood of $F$. 
Then $X^{-1}(F)\cap I=\emptyset$ a.s. 
\end{theorem}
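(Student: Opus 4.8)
The plan is to estimate $\bP\big(X^{-1}(F)\cap I\neq\emptyset\big)$ by a first-moment (covering) argument, but with a covering whose mesh is adapted to the *local* modulus of continuity of $X$ rather than its global oscillation, following Talagrand's method as refined in \cite{dalang2017polarity}. Fix a large integer $n$. Partition the interval $I$ into roughly $2^{nQ}$ sub-rectangles $R$, each of which is a ``$\Delta$-ball'' of radius $\sim 2^{-n}$, i.e.\ having side length $2^{-n/\alpha_j}$ in the $j$-th coordinate. For each such $R$ pick a center $t_R$. The key point is that, because $\theta\le d-Q$ and because we work with balls matched to the *smallest* local oscillation, a single scale $2^{-n}$ will not suffice; instead I expect to run the argument over a geometric range of scales and extract the gain from the $\log\log$ factor. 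Concretely, for a parameter to be optimized I would cover $\{t\in R: X(t)\in F\}$, on the event that $X$ does not oscillate too much on $R$, by the pulled-back image under $X$ of an $\eta_n$-net of $F^{(\eta_n)}$, where $\eta_n$ is chosen comparable to the guaranteed lower bound for $\inf_{s\in R}\|X(s)-X(t_R)\|$-type local oscillation (this is exactly where Proposition~\ref{Prop:X}, promised in the introduction, enters).

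The heart of the argument is the following dichotomy applied rectangle by rectangle. Using Assumptions {\bf (A2)} and {\bf (A3)} together with the conditional-variance / Cameron--Martin machinery from \cite{dalang2017polarity}, one shows: conditionally on a suitable ``anchor'' value $X(t'_R)$, the field $X$ restricted to $R$ behaves like a non-degenerate $\R^d$-valued field with a lower bound on its smallest local oscillation of order $2^{-n}(\log\log 2^n)^{-1/2}$ (the $\log\log$ is the Talagrand correction), \emph{except} on an event of probability at most $\exp(-c\,\log n)=n^{-c}$ with $c$ as large as we like by taking constants appropriately, or else exactly summable over the $2^{nQ}$ rectangles after choosing the exponent correctly. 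On the good event, $X(R)$ can meet $F$ only if $X(t_R)\in F^{(C2^{-n})}$, and by \eqref{Con:F} with $F$ bounded the probability of this, using the Gaussian density bound $p_{X(t_R)}(x)\le C$ from {\bf (A2)}, is at most
\begin{equation}\label{eq:onecell}
\bP\big(X(t_R)\in F^{(C2^{-n})}\big)\le C\,\lambda_d\big(F^{(C2^{-n})}\big)\le C'\,2^{-n(d-\theta)}\big(\log\log 2^n\big)^{\kappa}.
\end{equation}
Multiplying \eqref{eq:onecell} by the number $2^{nQ}$ of rectangles and summing the bad-event probabilities, we get
\[
\bP\big(X^{-1}(F)\cap I\neq\emptyset\big)\ \le\ C'\,2^{-n(d-\theta-Q)}\big(\log\log 2^n\big)^{\kappa}\ +\ (\text{bad-event terms}).
\]
When $d-\theta>Q$ the first term already tends to $0$; the subtle case is $d-\theta=Q$ (so $\kappa\in[0,0)$ forces $\kappa=0$ when $\theta=d-Q$, but for $\theta<d-Q$ the range of $\kappa$ is nonempty), where the $2^{-n(d-\theta-Q)}$ gain must beat the $(\log\log 2^n)^{\kappa}$ loss, which it does precisely under the hypothesis $\kappa<(d-\theta)/Q$ after one replaces the naive single-scale cover by the multiscale cover mentioned above — that is what the condition $\kappa<(d-\theta)/Q$ is calibrated for. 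Letting $n\to\infty$ along a sparse subsequence (to make the bad-event probabilities summable and invoke Borel--Cantelli) yields $X^{-1}(F)\cap I=\emptyset$ a.s.

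The main obstacle, and the step I would spend the most care on, is the construction and probabilistic control of the adapted covering: one must show that with overwhelming probability (at least $1-n^{-(Q/\alpha_{\min})-\delta}$, say, so that the union bound over $2^{nQ}$ cells still goes to $0$) the field $X$ on each cell $R$ has local oscillation bounded \emph{below} by $c\,2^{-n}(\log\log 2^n)^{-1/2}$ \emph{simultaneously in all $2^{nQ}$ cells}. This is a small-deviation / minoration estimate for Gaussian fields and is exactly the content of the promised Proposition~\ref{Prop:X}; its proof requires {\bf (A3)} to upgrade the Hölder regularity beyond Lemma~\ref{Lem:distance}, a chaining argument for the supremum of the conditioned field, and Anderson's inequality or a Gaussian correlation-type bound to handle the $\R^d$-valued, independent-coordinate structure. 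Once that proposition is in hand, the rest of the proof is the bookkeeping sketched above: choosing the scale sequence $n_k$, verifying \eqref{eq:onecell} from \eqref{Con:F}, and summing.
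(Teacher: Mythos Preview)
Your proposal contains the right buzzwords (Talagrand, adapted covering, multiscale) but the mechanism you describe is inverted, and the union-bound step you rely on is not available.

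\textbf{Direction of the oscillation estimate.} Proposition~\ref{Prop:X} gives an \emph{upper} bound on the local oscillation: with high probability there is a scale $r\in[r_0^2,r_0]$ at which $\sup_{\Delta(s,t)<r}|X(s)-X(t)|\le K_2\, r(\log\log 1/r)^{-1/Q}$. This is what you need: on a cell $R$ of $\Delta$-diameter $r$ where this holds, $X(R)$ is contained in a ball of radius $r_A\sim r(\log\log 1/r)^{-1/Q}$ around $X(t_R)$, so $X(R)\cap F\ne\emptyset$ forces $X(t_R)\in F^{(r_A)}$. You wrote the covering inclusion correctly in \eqref{eq:onecell} but then described the ``main obstacle'' as a \emph{lower} bound on oscillation; that is the wrong direction, and no minoration or small-deviation estimate is used. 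The exponent is $-1/Q$, not $-1/2$; this is exactly what makes the bound $r_A^{\,d-\theta}(\log\log)^{\kappa}$ acquire the factor $(\log\log)^{\kappa-(d-\theta)/Q}$, explaining the hypothesis $\kappa<(d-\theta)/Q$.

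\textbf{Failure of the simultaneous good event.} You require the good-oscillation event on all $2^{nQ}$ cells at once, with failure probability small enough to survive a union bound. But Proposition~\ref{Prop:X} only gives per-point failure probability $\exp(-\sqrt{\log 1/r_0})\sim e^{-\sqrt{k}}$, and $2^{kQ}e^{-\sqrt{k}}\to\infty$. The paper does not attempt this. Instead, Fubini gives that the \emph{measure} of bad points is at most $e^{-\sqrt{k}}\lambda_N(\bB_\eta(t_o))$ with high probability, hence the bad set is covered by at most $C\,2^{2kQ}e^{-\sqrt{k}/2}$ cubes of order $2k$ (the family $\cH_2(k)$). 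These bad cubes carry the coarse radius $r_A\sim 2^{-2k}\sqrt{k}$ from Lemma~\ref{Lem:X}; their total contribution is controlled by the $e^{-\sqrt{k}/2}$ factor.

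\textbf{What the paper actually does.} The covering $\cH(k)=\cH_1(k)\cup\cH_2(k)$ is built from $X^1$, where $X=X^1+X^2$, $X^2(t)=\E[X(t)\mid X(t_o')]$; this makes $\cH(k)$ independent of $X^2$, whose bounded density yields $\bP(\Omega_A\mid\Sigma_1)\le K_5\, r_A^{\,d-\theta}(\log\log 1/r_A)^{\kappa}$. Rather than bounding $\bP(X^{-1}(F)\cap I\ne\emptyset)$ directly, the paper shows that for $\phi(s)=s^{Q-d+\theta}(\log\log 1/s)^{(d-\theta)/Q-\kappa}$ the quantity $\liminf_k\sum_{A\in\cF(k)}\phi(D(A))$ has finite expectation; since $\phi(s)\to\infty$ as $s\to0^+$ under the hypotheses, the covered set must be empty a.s.
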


Observe that (\ref{Con:F}) implies that the upper Minkowski (or box-counting) dimension of $F$ is at most 
$\theta$ (see, e.g., \cite[Proposition 2.4]{Falconer2014}) and is satisfied by many bounded sets $F$. 
The following corollary of Theorem \ref{Th:main} shows two cases that could not be handled by the 
hitting probability result in~\cite{BLX2009}.     
\begin{corollary} \label{Co:main}
Let Assumptions {\bf (A1)}-{\bf (A3)} hold and let  $F\subset \R^d$ be a bounded set.
\begin{itemize}
\item[(i)]\, If  $d > Q$, the Hausdorff dimension of $ F$ equals $d-Q$, and \eqref{Con:F} holds with $\theta = d-Q$ 
and a constant $\kappa < 1$, then  $X^{-1}(F)\cap I=\emptyset$ a.s. 
\item[(ii)]\, If $d = Q$ and $F$ satisfies  \eqref{Con:F}  with $\theta = 0$ and a constant $\kappa < 1$, then  
$X^{-1}(F)\cap I=\emptyset$ a.s. 
\end{itemize}
\end{corollary}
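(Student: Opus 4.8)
Corollary \ref{Co:main} is a one-line consequence of Theorem \ref{Th:main}: in case (i) one applies the theorem with $\theta=d-Q$, so the admissible range $\kappa\in[0,(d-\theta)/Q)$ becomes $\kappa\in[0,1)$, which is exactly the stated hypothesis; in case (ii) the equality $d=Q$ forces $\theta=0=d-Q$ and the admissible range becomes $[0,d/Q)=[0,1)$. In (i) the hypothesis $\dimh F=d-Q$ is not needed for the conclusion --- it only records that $F$ sits at the critical dimension, a case outside the reach of \cite{BLX2009}; indeed \eqref{Con:F} with $\theta=d-Q$ already forces the upper Minkowski dimension, hence $\dimh F$, to be $\le d-Q$. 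In (ii) \emph{every} nonempty bounded $F$ falls in this new regime, since the $(d-Q)$-dimensional Hausdorff measure is then the counting measure. So the substantive task is Theorem \ref{Th:main}, which I would approach as follows.

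The strategy is a covering/first-moment argument, refined so as to keep working in the critical regime $\theta=d-Q$. First I reduce the statement to a vanishing estimate: for each scale $2^{-n}$ one constructs a random family of $\Delta$-balls of radius comparable to $2^{-n}$ which, on an event of probability close to $1$, covers $\{t\in I:X(t)\in F\}$ and has only a few \emph{surviving} members (balls that can still meet $X^{-1}(F)$); then $\bP(X^{-1}(F)\cap I\neq\emptyset)\le\bP(\text{cover fails at scale }n)+\E[N_n]$, where $N_n$ is the number of surviving scale-$n$ balls, and it suffices that both terms tend to $0$. The naive version --- the deterministic grid of $\asymp 2^{nQ}$ balls of $\Delta$-radius $2^{-n}$, replacing ``$X$ meets $F$ on $B_{n,i}$'' by ``$X(t_{n,i})\in F^{(\rho)}$'' or ``$X$ oscillates by more than $\rho$ on $B_{n,i}$'' at the centre $t_{n,i}$ --- fails, because by Gaussian concentration together with Lemma \ref{Lem:distance} the oscillation event has probability only of order $\exp(-c(\rho\,2^n)^2)$, which cannot be made smaller than $2^{-nQ}$ (the number of balls) unless $\rho\gg 2^{-n}\sqrt n$; and then, since \eqref{Con:F} gives $\lambda_d(F^{(\rho)})\lesssim\rho^{d-\theta}(\log\log(1/\rho))^\kappa$, the first term already blows up after multiplication by $2^{nQ}$ once $\theta=d-Q$. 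In other words a single fixed grid is too coarse; the covering must be no coarser than the smallest local oscillation of $X$.

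The refinement, following Talagrand \cite{Talagrand95,Talagrand98} and extending Dalang--Mueller--Xiao \cite{dalang2017polarity}, makes the covering adaptive and multiscale. Using Assumption \textbf{(A1)} I write $X(t)=W([0,2^n),t)+W([2^n,\infty),t)$ as a sum of two independent Gaussian fields; by \textbf{(a2)} the second summand reproduces the increments of $X$ over $\Delta$-balls of radius $2^{-n}$ up to an $O(2^{-n})$ error, while $Y_n(t):=W([0,2^n),t)$ is a martingale in $n$ converging to $X(t)$. One runs a scale-by-scale recursion: a scale-$2^{-n}$ ball $B$ that meets $X^{-1}(F)$ is declared \emph{caught} if the local oscillation of $X$ on it is at most a threshold $\psi_n$, a mild multiple of $2^{-n}$ whose size is controlled by Proposition \ref{Prop:X} (the key covering estimate, sharpened through the extra regularity in \textbf{(A3)}); in the caught case $X$ evaluated at the centre of $B$ lies in the thin neighbourhood $F^{(\psi_n)}$, an event of probability at most $c\,\lambda_d(F^{(\psi_n)})\le C\psi_n^{d-\theta}(\log\log(1/\psi_n))^\kappa$ because $X_i(t)$ has a density bounded above uniformly by \textbf{(A2)} (with \textbf{(A3)} guaranteeing this persists after conditioning on the randomness used to build the cover); otherwise $B$ is subdivided and passed to scale $n+1$ at the cost of a small probability for the atypically large oscillation. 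Propagating these bounds through the recursion yields an expected surviving count at scale $n$ of the form $2^{nQ}$ times a product of neighbourhood measures and oscillation costs, and the hypothesis $\kappa<(d-\theta)/Q$ is precisely what makes this count decay: at the critical balance $2^{nQ}\cdot 2^{-n(d-\theta)}$ the iterated-logarithm factors left over from \eqref{Con:F} and from Proposition \ref{Prop:X} must combine into a genuine gain, which happens exactly under that condition. Hence both error terms tend to $0$ and $X^{-1}(F)\cap I=\emptyset$ a.s.

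I expect the main obstacle to be Proposition \ref{Prop:X} and its use: one needs a uniform-in-$t$, uniform-in-scale control of the local oscillations of $X$ with precisely the right iterated-logarithm rate, so that the probabilistic cost of the uncaught balls neither overwhelms the geometric gain from \eqref{Con:F} nor forces the neighbourhoods $F^{(\psi_n)}$ to be inflated too much, and the recursion must be organized so that $\kappa<(d-\theta)/Q$ comes out as the sharp threshold. By comparison the remaining ingredients are routine but need care: passing from $X$ to its high-frequency part $W([2^n,\infty),\cdot)$ while absorbing the $O(2^{-n})$ errors in \textbf{(a2)}, upgrading Lemma \ref{Lem:distance} to a sharper oscillation bound using \textbf{(A3)}, and checking that the Gaussian vector at the centre of a ball keeps a bounded conditional density after conditioning on the part of the noise that determines the covering --- the point where \textbf{(A2)} and \textbf{(A3)} are used together.
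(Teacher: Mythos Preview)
Your observation that the corollary is a one-line specialization of Theorem~\ref{Th:main} is correct and matches the paper's treatment: the paper provides no separate proof for Corollary~\ref{Co:main}, it is stated as an immediate consequence (and your remarks on why the Hausdorff-dimension hypothesis in (i) is decorative are accurate).

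Your sketch of how Theorem~\ref{Th:main} would be proved is in the right spirit but differs from the paper's argument in two places worth flagging. First, the conditioning scheme: you propose the spectral cut $X(t)=W([0,2^n),t)+W([2^n,\infty),t)$ coming directly from \textbf{(A1)}, whereas the paper uses the orthogonal decomposition $X=X^1+X^2$ with $X^2(t)=\E[X(t)\mid X(t_o')]$ for a single reference point $t_o'$ supplied by \textbf{(A3)}. The covering is built from the oscillation of $X^1$ (so it is $\sigma(X^1)$-measurable), and the bounded-density step is then trivial because $X^2$ is a deterministic multiple of the single Gaussian vector $X(t_o')$. Assumption~\textbf{(A3)} is used not to ``upgrade'' Lemma~\ref{Lem:distance} but to show that $X^2$ is smoother than $X$ (exponents $\delta_j>\alpha_j$), so that the oscillation of $X^1$ tracks that of $X$ and Proposition~\ref{Prop:X} transfers to $X^1$. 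Your proposed conditioning on the low-frequency part would require a separate argument that $X(t_A)$ retains a bounded conditional density, which is less clean. Second, the endgame: rather than showing an expected surviving count tends to~$0$, the paper introduces the gauge $\phi(s)=s^{Q-d+\theta}(\log\log(1/s))^{(d-\theta)/Q-\kappa}$, bounds $\E\big[\mathbf{1}_{\Omega_{k,3}}\sum_{A\in\mathcal F(k)}\phi(D(A))\big]$ uniformly in $k$, and concludes by Fatou that $\phi\text{-}m(X^{-1}(F)\cap\mathbf B_\eta(t_o))<\infty$ a.s.; since $\phi(s)\to\infty$ under the hypotheses, any nonempty covered set would force the sum to diverge. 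The two endings are equivalent (a uniform bound on the $\phi$-weighted sum yields $\E|\mathcal F(k)|\le C/\phi(2^{-k})\to 0$), but the $\phi$-measure formulation is how the paper isolates the threshold $\kappa<(d-\theta)/Q$.
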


For any $t_o\in \R^N$ and constant $\eta > 0$, denote by $\bB_{\eta}(t_o)$ the closed ball in $\R^N$ centered  
at $t_o$ with radius $\eta$ in the metric $\Delta$, i.e.,
\begin{equation}\label{e:ball}
\bB_{\eta}(t_o)=\left\{ t\in \R^N: \Delta(t, t_o)\le \eta\right\}.
\end{equation}


For proving Theorem \ref{Th:main}, it suffices to show $X^{-1}(F) \cap \bB_{\eta}(t_o) =\emptyset$ a.s. 
for all $t_o\in I$, where $\eta>0$ is a small constant. Hence, we assume that $t_o \in I$ is fixed throughout 
the rest of this paper.  Let $t_o'$ be the corresponding point given in Assumption \textbf{(A3)} which is also fixed, 
and let $\eta$ be a fixed small positive number satisfying
\[
\mathbf B_\eta(t_o) \subset I^{(\epsilon_0)} \mbox{ and } \eta < \min \Big\{\frac12\min\{1, a_0^{-1}\}, {\rho_0}\Big\},
\]
 where we recall that the parameters $\epsilon_0, a_0$ are given in {\bf (A1)} and $\rho_0$ in {\bf (A3)}. 

For any $t\in \bB_{\eta}(t_o)$, denote 
\begin{equation}\label{eq:-def-X1X2}
	X^1(t)=X(t)-X^2(t), \quad
	X^2(t)=\E[X(t)|X(t_o')].
\end{equation}
Since $X$ is Gaussian, $X^1$ and $X^2$ are independent and  for $1\le j\le d$,
\begin{align} \label{e:X2}
	X_j^2(t) = \dfrac{\E \left[ X_j(t) X_j(t_o') \right]}{\E \left[X_j(t_o')^2\right]} X_j(t_o').
\end{align}
The following result, which is a generalization of \cite[Lemma 4.2]{Xiao97},  implies that $X^2$ has better 
path regularity than $X$ noting that $\delta_j>\alpha_j$ by Assumption {\bf(A3)}. Therefore, $X^1$ can 
be viewed as a small perturbation of $X$. 

\begin{lemma}\label{Lem:X2}
	Let Assumptions {\bf (A2)} and {\bf (A3)} hold. Then, for any $s,t\in \bB_{\eta}(t_o)$, 
	\begin{equation*}
		|X^2(s)-X^2(t)| \le K_1 |X(t_o')| \sum_{j=1}^N |s_j - t_j|^{\delta_j},
	\end{equation*}
	where $K_1>0$ is a finite constant only depending on $d_0$ and $t_o'$.
\end{lemma}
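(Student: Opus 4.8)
The plan is to argue directly from the explicit representation \eqref{e:X2} of the coordinates of $X^2$, reducing the problem to an upper bound on a covariance increment (supplied by \textbf{(A3)}) and a lower bound on a variance (supplied by \textbf{(A2)}). Fix $s,t\in\bB_{\eta}(t_o)$. Subtracting the two instances of \eqref{e:X2} and using linearity of expectation, for each $1\le i\le d$,
\[
X_i^2(s)-X_i^2(t)=\frac{\E\!\left[X_i(t_o')\bigl(X_i(s)-X_i(t)\bigr)\right]}{\E\!\left[X_i(t_o')^2\right]}\,X_i(t_o').
\]
Thus it suffices to bound the numerator from above and the denominator away from $0$, uniformly in $i$.

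For the numerator I would invoke Assumption \textbf{(A3)} with the distinguished point there taken to be $t_o$ (so that $t'=t_o'$, $\delta_j=\delta_j(t_o)\in(\alpha_j,1]$, and $C=C(t_o)$), and with the pair $s,\bar s$ there taken to be our points $s,t$. The required localization $\max\{\Delta(t_o,s),\Delta(t_o,t)\}\le 2\rho_0$ holds because $s,t\in\bB_{\eta}(t_o)$ and $\eta<\rho_0$; hence
\[
\Bigl|\E\!\left[X_i(t_o')\bigl(X_i(s)-X_i(t)\bigr)\right]\Bigr|\le C\sum_{j=1}^N|s_j-t_j|^{\delta_j},\qquad 1\le i\le d.
\]
For the denominator, since $t_o'\in I^{(\epsilon_0)}$, Assumption \textbf{(A2)} gives $\E[X_i(t_o')^2]=\|X_i(t_o')\|_{L^2}^2\ge d_0^2$ for every $i$. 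Combining the two estimates yields the componentwise bound
\[
\bigl|X_i^2(s)-X_i^2(t)\bigr|\le\frac{C}{d_0^2}\,|X_i(t_o')|\sum_{j=1}^N|s_j-t_j|^{\delta_j},\qquad 1\le i\le d.
\]

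Finally I would pass from coordinates to the Euclidean norm in $\R^d$: squaring the last display, summing over $i=1,\dots,d$, and bounding $\sum_i X_i(t_o')^2=|X(t_o')|^2$ gives
\[
|X^2(s)-X^2(t)|^2=\sum_{i=1}^d\bigl|X_i^2(s)-X_i^2(t)\bigr|^2\le\frac{C^2}{d_0^4}\,|X(t_o')|^2\Bigl(\sum_{j=1}^N|s_j-t_j|^{\delta_j}\Bigr)^{\!2},
\]
and taking square roots yields the claim with $K_1:=C/d_0^2$, a finite constant independent of $s,t$ and of the sample point (it depends only on $d_0$ and on the fixed point $t_o$, hence on $t_o'$, through $C=C(t_o)$ in \textbf{(A3)}). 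There is no substantial obstacle here; the only points needing care are verifying that the localization hypothesis of \textbf{(A3)} is met on $\bB_{\eta}(t_o)$ — which is precisely why $\eta$ was chosen smaller than $\rho_0$ — and keeping the two index ranges distinct, namely the $\R^d$-coordinate index $i$ running in the covariance/variance estimates and the $\R^N$-coordinate index $j$ running in the sum $\sum_{j=1}^N|s_j-t_j|^{\delta_j}$.
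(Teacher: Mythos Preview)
Your proposal is correct and follows essentially the same approach as the paper's proof: both use the explicit formula \eqref{e:X2} to express $X_i^2(s)-X_i^2(t)$, then bound the covariance increment in the numerator by \textbf{(A3)} and the variance in the denominator by \textbf{(A2)}. You are in fact more careful than the paper's version, explicitly verifying the localization hypothesis of \textbf{(A3)}, correctly writing $d_0^2$ for the variance lower bound, and spelling out the passage from componentwise bounds to the Euclidean norm in $\R^d$.
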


\begin{proof}
For $1 \le i \le N$, we have 
	\begin{align*}
		|X_i^2(s)-X_i^2(t)|
		=& \left| \dfrac{\E \left[ X_i(s) X_i(t_o') \right]}{\E \left[X_i(t_o')^2\right]} X_i(t_o') - \dfrac{\E \left[ X_i(t) X_i(t_o') \right]}
		{\E \left[X_i(t_o')^2\right]} X_i(t_o') \right| \\
		=& \dfrac{\Big| \E \left[ \left( X_i(s) - X_i(t) \right) X_i(t_o') \right] \Big|}{\E \left[X_i(t_o')^2\right]} \big| X_i(t_o') \big| \\
		\le& \dfrac{C}{d_0} \sum_{j=1}^N |s_j - \bar{s}_j|^{\delta_j} \big| X_i(t_o') \big|,
	\end{align*}
	where the first equality follows from \eqref{e:X2} and the inequality follows from  {\bf (A2)} and {\bf (A3)}. 
\end{proof}

We recall the following \cite[Proposition 2.3]{dalang2017polarity} which is analogous to \cite[Proposition 4.1]{Talagrand95} 
and is  the key ingredient for the construction of a random covering for the set $X^{-1}(F) \cap \bB_{\eta}(t_o)$.
\begin{proposition} \label{Prop:X}
Let Assumption {\bf (A1)} hold. Then there exist constants $K_2\in(0, \infty)$ and  $\delta_0\in(0,1]$ such that 
for any $r_0 \in (0, \delta_0)$ and $t \in I$, 
\[ \begin{split}
		&\bP \left\{\exists r \in [r_0^2, r_0], \sup_{s\in I^{(\epsilon_0)}:\Delta(s,t) < r} |X(s)-X(t)|
		\le K_2 r \left(\log \log \frac1r\right)^{- 1/Q} \right\}\\
		& \ge 1- \exp \left(-\sqrt {\log \frac1{r_0}} \right).
	\end{split}
	\]
\end{proposition}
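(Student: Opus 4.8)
The plan is to adapt the Talagrand-type construction of \cite[Proposition~4.1]{Talagrand95}, in the anisotropic form used in \cite{dalang2017polarity}, to the present setting. Fix $t\in I$ and abbreviate $\psi(r):=K_2\,r\,(\log\log(1/r))^{-1/Q}$. Over the geometric band of scales $[r_0^2,r_0]$ it suffices to produce \emph{one} scale $r$ with $\sup_{s\in\bB_r(t)\cap I^{(\epsilon_0)}}|X(s)-X(t)|\le\psi(r)$, because $\{s\in I^{(\epsilon_0)}:\Delta(s,t)<r\}\subset\bB_r(t)$. The point is that the event at a \emph{single} scale $r$ already has probability at least a small negative power of $\log(1/r)$, so that combining $\asymp\log(1/r_0)$ essentially independent such events clears the bound $\exp(-\sqrt{\log(1/r_0)})$. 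Independence is manufactured from the noise structure {\bf (A1)}: for a frequency band $E=[a,b)$ the field $s\mapsto W(E,s)$ captures the oscillation of $X$ near $t$ at the scale associated with $[a,b)$ up to the remainder $Z_{a,b}(s):=\big(X(s)-W(E,s)\big)-\big(X(t)-W(E,t)\big)$, whose $L^2$-norm is controlled by (a2) in {\bf (A1)}, while (a1) makes the fields attached to disjoint bands independent.

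\textbf{Single-scale estimate.} For $r$ small, Lemma~\ref{Lem:distance} gives $\|X(s)-X(s')\|_{L^2}\le 4c_0\Delta(s,s')$ on $\bB_r(t)$, hence the $\e$-covering number of $\bB_r(t)$ in the canonical metric of $X$ is at most $c\,(r/\e)^Q$. Since $X_1,\dots,X_d$ are i.i.d., the standard entropy lower bound for Gaussian small deviations (see e.g.\ \cite{Talagrand95,dalang2017polarity}) yields
\[
\bP\Big\{\sup_{s\in\bB_r(t)\cap I^{(\epsilon_0)}}|X(s)-X(t)|\le\tfrac14\psi(r)\Big\}\ \ge\ \exp\!\big(-c_1\,(r/\psi(r))^Q\big)\ =\ \big(\log(1/r)\big)^{-\beta},\qquad \beta=\beta(K_2)=c_1K_2^{-Q},
\]
where $(r/\psi(r))^Q=K_2^{-Q}\log\log(1/r)$; the exponent $-1/Q$ on the iterated logarithm in $\psi$ is exactly what turns the entropy into a power of $\log(1/r)$. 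Fix $K_2$ once and for all so large that $\beta<\tfrac12$, and require $\delta_0$ small enough that Lemma~\ref{Lem:distance} applies on every $\bB_r(t)$ with $r\le r_0<\delta_0$.

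\textbf{Scales and bands.} Pick constants $c_1,c_2$ large and $c_*>c_1+c_2$, set $q:=\lceil(\log(1/r_0))^{c_*}\rceil$ and $r_k:=r_0q^{-k}$ for $k=0,\dots,M$ with $M:=\lfloor\log(1/r_0)/\log q\rfloor$, so that $r_0^2\le r_M\le\dots\le r_0$ and $M\asymp\log(1/r_0)/\log\log(1/r_0)$. Attach to $r_k$ the band $E_k:=[a_k,b_k)$ with $a_k:=r_k^{-1}(\log(1/r_k))^{-c_1}$ and $b_k:=r_k^{-1}(\log(1/r_k))^{c_2}$; then $c_*>c_1+c_2$ forces $E_1,\dots,E_M$ to be pairwise disjoint (and $a_k\ge a_0$ for $r_0<\delta_0$), so $W(E_1,\cdot),\dots,W(E_M,\cdot)$ are independent by (a1). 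By (a2), for $s\in\bB_{r_k}(t)$, using $|s_j-t_j|\le r_k^{1/\alpha_j}$ and $\gamma_*:=\min_j\gamma_j$,
\[
\big\|Z_{a_k,b_k}(s)\big\|_{L^2}\ \le\ c_0\Big[\textstyle\sum_{j=1}^N a_k^{\gamma_j}|s_j-t_j|+b_k^{-1}\Big]\ \le\ C\,r_k\big(\log(1/r_k)\big)^{-\min(c_1\gamma_*,\,c_2)}\ \le\ \psi(r_k)\big(\log(1/r_k)\big)^{-1},
\]
provided $c_1,c_2$ were chosen with $\min(c_1\gamma_*,c_2)>1/Q+1$ and $r_0<\delta_0$. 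Combined with Dudley's entropy bound (which, via (a2) and Lemma~\ref{Lem:distance}, controls $\E\sup_{\bB_{r_k}(t)}|Z_{a_k,b_k}|$ by the same order up to an iterated-logarithmic factor) and the Borell--TIS inequality, this gives, for every level $v\ge\tfrac14\psi(r_k)$,
\[
\bP\Big\{\sup_{s\in\bB_{r_k}(t)\cap I^{(\epsilon_0)}}|Z_{a_k,b_k}(s)|>v\Big\}\ \le\ \exp\!\big(-(\log(1/r_k))^{3/2}\big),
\]
after shrinking $\delta_0$ if necessary.

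\textbf{Combination and main obstacle.} Put $A_k:=\{\sup_{s}|W(E_k,s)-W(E_k,t)|\le\tfrac12\psi(r_k)\}$ and $C_k:=\{\sup_{s}|Z_{a_k,b_k}(s)|\le\tfrac12\psi(r_k)\}$, with suprema over $\bB_{r_k}(t)\cap I^{(\epsilon_0)}$. Since $X(s)-X(t)=\big(W(E_k,s)-W(E_k,t)\big)+Z_{a_k,b_k}(s)$, one has $A_k\cap C_k\subset\{\sup_s|X(s)-X(t)|\le\psi(r_k)\}$, which is contained in the event of the statement with $r=r_k$. The $A_k$ are independent; and since $W(E_k,s)-W(E_k,t)=\big(X(s)-X(t)\big)-Z_{a_k,b_k}(s)$, the single-scale estimate and the $Z$-tail bound give $\bP(A_k)\ge(\log(1/r_k))^{-\beta}-\exp(-(\log(1/r_k))^{3/2})\ge\tfrac14(\log(1/r_0))^{-\beta}$ for $r_0<\delta_0$ (using $\log(1/r_k)\le 2\log(1/r_0)$). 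As $\beta<\tfrac12$ and $M\asymp\log(1/r_0)/\log\log(1/r_0)$, for $r_0<\delta_0$ we get $\sum_{k=1}^M\bP(A_k)\ge 2\sqrt{\log(1/r_0)}$, hence
\[
\bP\Big(\bigcup_{k=1}^M A_k\Big)\ \ge\ 1-\prod_{k=1}^M\big(1-\bP(A_k)\big)\ \ge\ 1-\exp\!\Big(-\sum_{k=1}^M\bP(A_k)\Big)\ \ge\ 1-\exp\!\big(-2\sqrt{\log(1/r_0)}\big),
\]
while $\bP\big(\bigcup_{k=1}^M C_k^c\big)\le M\exp(-(\log(1/r_0))^{3/2})\le\exp(-2\sqrt{\log(1/r_0)})$ for $r_0<\delta_0$. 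Intersecting, $\bP\big(\bigcup_{k=1}^M(A_k\cap C_k)\big)\ge 1-2\exp(-2\sqrt{\log(1/r_0)})\ge 1-\exp(-\sqrt{\log(1/r_0)})$ for $r_0<\delta_0$, and the constants $K_2,\delta_0$ do not depend on $t$; this is the assertion. The main obstacle is the single-scale small-ball estimate with a tunable exponent $\beta\downarrow 0$, which requires the sharp entropy-based Gaussian lower bound and the precise calibration $\psi(r)\asymp r(\log\log(1/r))^{-1/Q}$; a secondary difficulty is the bookkeeping in the choice of $q,c_1,c_2$, where disjointness of the bands and smallness of the remainders each push the consecutive scale ratio up to a power of $\log(1/r_0)$, yet enough scales ($M\asymp\log(1/r_0)/\log\log(1/r_0)$) must survive for $\sum_k\bP(A_k)$ to beat $\sqrt{\log(1/r_0)}$. (A minor technical point inside the $Z$-tail estimate is that the increments of $Z_{a_k,b_k}$ do not decay below the scale $b_k^{-1}$, but $b_k^{-1}\ll\psi(r_k)$ makes this harmless.)
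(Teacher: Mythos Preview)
The paper does not actually prove this proposition: it simply quotes it as \cite[Proposition~2.3]{dalang2017polarity}, noting the analogy with \cite[Proposition~4.1]{Talagrand95}, and then uses it as a black box. Your sketch is a faithful reconstruction of exactly the argument carried out in \cite{dalang2017polarity} (itself the anisotropic adaptation of Talagrand's original proof): the decomposition $X(s)-X(t)=\big(W(E_k,s)-W(E_k,t)\big)+Z_{a_k,b_k}(s)$ from {\bf (A1)}, the choice of geometric scales $r_k$ spaced by a power of $\log(1/r_0)$ so that the frequency bands $E_k$ are disjoint, the entropy-based small-ball lower bound $\bP(A_k)\gtrsim(\log(1/r_k))^{-\beta}$ with $\beta=\beta(K_2)$ tunable below $1/2$, the Dudley/Borell--TIS control of the remainder $Z$, and the final product bound over the independent $A_k$. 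So your approach and the (cited) proof coincide, and your calibration of the parameters $q,c_1,c_2,\beta$ is correct.

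One small remark on presentation: in the single-scale step you apply the small-ball lower bound to the full field $X$ and then transfer it to $W(E_k,\cdot)$ via the $Z$-tail estimate. In \cite{dalang2017polarity} the small-ball bound is applied directly to the band process $W(E_k,\cdot)-W(E_k,t)$, whose canonical metric is again dominated by $C\Delta$ thanks to (a2) and Lemma~\ref{Lem:distance}; the two routes are equivalent and yield the same estimate. Either way, the key technical input you invoke---Talagrand's small-ball inequality $\bP\{\sup|Y_s-Y_{t_0}|\le u\}\ge\exp(-K(A/u)^Q)$ under an entropy bound $N(\e)\le(A/\e)^Q$---is exactly what is used in the cited reference, so there is no gap.
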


The following lemma will also be used to construct the random covering in the sequel. More specifically, it will be used 
to control the size of the covering ball centered at  $X(t)$ where the local oscillation of $X$ around $t$ is larger than 
what is given in Proposition \ref{Prop:X}.

\begin{lemma} \label{Lem:X}
	Let Assumption {\bf (A1)} hold. Then there exists a constant $K_4\in(0,\infty)$ such that
	\begin{align}\label{e:X}
		\bP \Bigg( \sup_{s,t \in I^{(\epsilon_0)}: {\Delta(s,t)\le \varepsilon}} |X(s) - X(t)| 
		\le K_4 \varepsilon \sqrt{\log \dfrac{1}{\varepsilon}} \Bigg) \ge 1 - \varepsilon,
	\end{align}
	for all $\varepsilon \in(0,\frac12)$.
\end{lemma}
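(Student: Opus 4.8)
The plan is to establish the tail bound in Lemma \ref{Lem:X} via a chaining/metric-entropy argument for the Gaussian process $X$ on $I^{(\epsilon_0)}$ with respect to the metric $\Delta$. First I would invoke Lemma \ref{Lem:distance}, which under Assumption \textbf{(A1)} gives $\|X(s)-X(t)\|_{L^2} \le 4c_0 \Delta(s,t)$ for $s,t$ sufficiently close in the $\Delta$-metric; thus the canonical $L^2$-metric of $X$ is dominated by a constant multiple of $\Delta$. Since $(I^{(\epsilon_0)}, \Delta)$ is a bounded set that is comparable to a product of intervals with the (quasi-)metric $\sum_j |s_j-t_j|^{\alpha_j}$, its covering number $N(\varepsilon)$ — the minimal number of $\Delta$-balls of radius $\varepsilon$ needed to cover it — satisfies $N(\varepsilon) \le C \varepsilon^{-Q}$ for small $\varepsilon$, where $Q = \sum_j 1/\alpha_j$ as in \eqref{e:Q}. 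This is the standard volumetric estimate for anisotropic boxes.

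Next I would apply a standard Gaussian modulus-of-continuity / Dudley-type entropy bound. The modulus of continuity of a centered Gaussian field over a metric space of polynomial metric entropy satisfies a bound of the form
\[
\bP\Bigg( \sup_{\Delta(s,t)\le \varepsilon} |X(s)-X(t)| \ge K \varepsilon \sqrt{\log(1/\varepsilon)} + u \Bigg) \le \exp\left(-\frac{u^2}{c\,\varepsilon^2}\right)
\]
for appropriate constants; see for instance the Gaussian concentration results underlying \cite[Proposition 4.1]{Talagrand95} or the classical entropy estimates. Choosing $u$ a suitable multiple of $\varepsilon\sqrt{\log(1/\varepsilon)}$, the right-hand side becomes at most $\varepsilon$ once the constant $K_4$ in front of $\varepsilon\sqrt{\log(1/\varepsilon)}$ is taken large enough (depending only on $c_0$, $Q$, and the dimensions $N,d$). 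Alternatively, and perhaps more cleanly, one can directly use a union bound: cover $I^{(\epsilon_0)}$ by $N(\varepsilon^2) \le C\varepsilon^{-2Q}$ balls of radius $\varepsilon^2$, control the oscillation within each ball by a further chaining sum (which contributes the $\sqrt{\log(1/\varepsilon)}$ factor), and apply the one-dimensional Gaussian tail $\bP(|Z|\ge x) \le e^{-x^2/2\sigma^2}$ together with the variance bound from Lemma \ref{Lem:distance}.

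The main obstacle I anticipate is bookkeeping the constants and the anisotropy: the metric $\Delta$ is not a genuine metric's worth of homogeneity (the exponents $\alpha_j$ differ across coordinates), so the covering number computation must be done coordinate-by-coordinate, and the chaining sum $\sum_{k} 2^{-k}\sqrt{\log N(2^{-k})}$ must be verified to converge and to produce exactly a bound of order $\varepsilon\sqrt{\log(1/\varepsilon)}$ rather than, say, $\varepsilon\sqrt{\log(1/\varepsilon)}\cdot(\text{extra log factors})$. This requires truncating the chaining at scale $\varepsilon$ (the ``top'' of the sum) and carefully estimating the geometric-type series; the polynomial entropy $N(\delta)\asymp \delta^{-Q}$ is exactly what makes $\sqrt{\log N(\delta)} \asymp \sqrt{Q\log(1/\delta)}$, so the tail of the chaining series starting at scale $\varepsilon$ is $\asymp \varepsilon\sqrt{\log(1/\varepsilon)}$, as needed. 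Once this is in hand, absorbing the resulting Gaussian deviation tail into the bound $1-\varepsilon$ is routine, since $\exp(-c\log(1/\varepsilon)) = \varepsilon^{c}$ can be made $\le \varepsilon$ by enlarging $K_4$. Finally, since the statement is only claimed for $\varepsilon \in (0,\tfrac12)$, we may freely restrict to $\varepsilon$ small enough that Lemma \ref{Lem:distance} applies (i.e. $\varepsilon \le \min\{a_0^{-1},1\}$) and adjust $K_4$ on the remaining compact range of $\varepsilon$.
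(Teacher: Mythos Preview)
Your plan is correct and matches the paper's approach: both use Lemma~\ref{Lem:distance} to dominate the canonical metric by $\Delta$, then a Dudley-type entropy bound with polynomial covering numbers $N(\delta)\lesssim\delta^{-Q}$ together with Gaussian concentration to produce the $\varepsilon\sqrt{\log(1/\varepsilon)}$ modulus and the $1-\varepsilon$ probability. The only packaging difference is that the paper applies the Ledoux--Talagrand Orlicz-space inequality \cite[(11.4)]{LT91} directly to the doubled process $Z(s,\bar s)=X(s)-X(\bar s)$ on $T_\varepsilon=\{(s,\bar s):\Delta(s,\bar s)\le\varepsilon\}$, whose $d_Z$-diameter is $O(\varepsilon)$ and whose covering number is $\lesssim(\varepsilon/\delta^{2})^{Q}$, thereby avoiding the explicit truncation of the chaining sum at scale $\varepsilon$ that you describe.
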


 Lemma \ref{Lem:X} follows from Lemma 2.1 in Talagrand
 \cite{Talagrand95} (there is a misprint in Lemma 2.1: on the right-hand side of (2.1), $D$ should be $D^2$). 
 For completeness, we provide a proof of Lemma \ref{Lem:X} by invoking a useful inequality presented in 
 \cite[Chapter 11]{LT91} for general stochastic processes.  
Recall that $\psi: \R_+ \to \R_+$ is called a  \emph{Young function} if it is convex and increasing such that $\psi(0)=0, \lim_{x\to \infty} 
\psi(x)=\infty$. The Orlicz space $L_\psi=L_\psi(\Omega, \mathcal A, \bP)$ associated to a Young function $\psi$ is the space of all 
real valued random variables $Y$ on $(\Omega, \mathcal A, \bP)$ such that $\E[\psi(|Y|/c)]<\infty$ for some $c>0$, and it is a 
Banach space under the norm 
\[\|Y\|_\psi=\inf\Big\{ c>0; ~\E[\psi(|Y|/c)]\le 1\Big\}. \]
In particular, it is easy to verify that if we choose $\psi(x)=e^{x^2}-1$, then for a centered Gaussian random variable $Y$, $\|Y\|_{\psi}
= C \|Y\|_{L^2}$ for some universal constant $C>0$. 

Let $T$ be an index set and $d$ be a pseudo-metric on $T$.  Consider a general stochastic process $Z=\{Z_t, t\in T\}$  
such that $\|Z_t\|_{\psi}<\infty$ for all $t\in T$ and 
\begin{equation}\label{condition-d}
\|Z_s-Z_t\|_\psi\le d(s,t), \text{ for all } s, t\in T.
\end{equation}
If we assume that the inverse function $\psi^{-1}$ of  the Young function $\psi$ satisfies
\begin{equation}\label{condition-psi}
\psi^{-1}(xy) \le C_{\psi}  \left(\psi^{-1}(x)+\psi^{-1}(y)\right)
\end{equation}
for some constant $C_\psi$ depending only on $\psi$,
then we have (see inequality (11.4) in \cite{LT91}), for all $u>0$,
\begin{equation}\label{e:ineq-LT}
\bP\left(\sup_{s,t\in T} |Z_s-Z_t|>8C_{\psi}\left(u+\int_0^D \psi^{-1} (N(T,d;\e)) d\e\right)\right)\le \Big( \psi(u/D) \Big)^{-1},
\end{equation}
where $N(T,d;\e)$ is the smallest number of open balls of radius $\e$ in the pseudo-metric $d$ which form a covering of $T$, 
and \[D=\sup_{s,t\in T} d(s,t)\] is the  diameter of  $T$ in the pseudo-metric $d$.

\begin{proof}[Proof of Lemma \ref{Lem:X}]
Denote the index set
\[
T_\e=\Big\{(t,\bar t) \in I^{(\epsilon_0)}\times I^{(\epsilon_0)}: \Delta (t, \bar t) \le \e\Big\}.
\]
To prove the desired result, we shall apply  \eqref{e:ineq-LT} with $\psi(x)=e^{x^2}-1$  to the Gaussian random field $Z$ on 
$T_\e$ defined by 
\[Z(\boldsymbol t)=Z(t, \bar t) =X(t)-X(\bar t), ~\boldsymbol t=(t, \bar t) \in T_\e.\]
Let $d_Z$ be the canonical metric on $T_\e$ induced by $Z$, i.e.,  for $\boldsymbol s=(s,\bar s)\in T_\e$ and $\boldsymbol t=(t, \bar t)
 \in T_\e,$ 
\begin{equation}\label{e:metric-d-Z}
d_Z(\boldsymbol s, \boldsymbol t) := \|Z(\boldsymbol s)-Z(\boldsymbol t)\|_{L^2} 
=\sqrt{\E\Big[\big(X(s)-X(\bar s)\big)-\big(X(t)-X(\bar t)\big) \Big]^2}.
\end{equation}
Then by the triangle inequality, we have
\begin{numcases}
{d_Z(\boldsymbol s, \boldsymbol t) \le} 
 \|X(s)-X(\bar s)\|_{L^2}+\|X(t)-X(\bar t)\|_{L^2} \label{case1}\\  \|X(s)-X(t)\|_{L^2}+\|X(\bar s)-X(\bar t)\|_{L^2} \label{case2}.
\end{numcases}
Thus by \eqref{case1} and Lemma \ref{Lem:distance},  the diameter $D$ of $T_\e$ in the metric $d_Z$ is at most $8 c_0 \e$. 

Next, for $\delta\in(0, \e)$, we count the number of balls of  radius $\delta$ in  $d_Z$ that are needed to cover $T_\e$.  
Recalling $Q=\sum_{j=1}^N\frac1{\alpha_j}$, we note that $T_\e$ in $\R^{2N}$ can be covered by, 
\begin{equation}\label{e:covering-number}
(8c_0N)^{2Q}\prod_{j=1}^N\frac{d_j-c_j}{\delta^{1/\alpha_j}} \prod_{j=1}^N \frac{\e}{\delta^{1/\alpha_j}}
\end{equation}
rectangles in $\R^{2N}$ of the form  $J_{1}\times J_{2}$, where $J_1, J_2\subset I^{(\epsilon_0)}$ are rectangles 
in $\R^N$ of the form $\prod_{j=1}^N [f_j, g_j]$ with $|f_j-g_j|^{\alpha_j}=\delta/(8c_0N)$. Thus, for all $\boldsymbol s= 
(s, \bar s), \, \boldsymbol t=(t, \bar t)\in J_1\times J_2$, we have by \eqref{case2} 
and Lemma \ref{Lem:distance},
\begin{align*}
d_Z(\boldsymbol s, \, \boldsymbol t)\le \|X(s)-X(t)\|_{L^2}+\|X(\bar s)-X(\bar t)\|_{L^2}\le  \delta,
\end{align*}
and  hence the diameters of such $J_1, J_2$ are not bigger than  $\delta$ in the metric $d_Z$.
Therefore, by \eqref{e:covering-number} we have 
\[N(T_\e, d_Z; \delta)\le C \left(\frac1 \delta\right)^{Q}\left(\frac\e\delta\right)^Q,\]
where $C$ is a constant depending only on $c_0, Q, N$ and $I$. 

Now, we apply \eqref{e:ineq-LT}. By choosing $\psi(x)=e^{x^2}-1$ and the metric $d$ defined by \eqref{e:metric-d-Z}, 
the conditions \eqref{condition-d} and \eqref{condition-psi} are satisfied, and we have for any $u>0$,
\begin{align}
	&\bP\left(\sup_{\boldsymbol s,\, \boldsymbol t\in T_\e} |Z(\boldsymbol s)-Z(\boldsymbol t)|>
	8C_{\psi}\left(u+\int_0^{8 c_0\e} \sqrt{\log (1+N(T_\varepsilon,d_Z;\delta))}\, d\delta \right)\right)\notag\\
	& \le \frac{1}{\exp(\frac{u^2}{64c_0^2\e^2})-1}\, .\label{e:X'}
\end{align}

Note that 
\[
\int_0^{8 c_0\e}\sqrt{\log N(T_\e, d_Z;\delta)} ~d\delta \le C \e \sqrt{\log\frac1\e}.
\]
Then by choosing  $\boldsymbol t=(t,t)$ and $u= K\e\sqrt{\log\frac1\e}$ for some proper positive constant $K$, the desired 
inequality \eqref{e:X} can be obtained by \eqref{e:X'}.  
\end{proof}

We introduce some notations before constructing a random covering for $X^{-1}(F)\cap \bB_{\eta}(t_o)$. For 
$k \in \bN_+$, define the random subset  $R_k$ of $\bB_{\eta}(t_o)$ as follows
\begin{equation}\label{eq-def-Rk}
	R_k = \left\{ t\in \bB_{\eta}(t_o): \exists r\in[2^{-2k}, 2^{-k}], \sup_{s\in I^{(\epsilon_0)}:\Delta(s,t) < r} |X(s)-X(t)|
	\le K_2 r \left(\log \log \frac1r\right)^{-1/Q} \right\}. 
\end{equation}

Proposition \ref{Prop:X} implies that for sufficiently large $k$, $\bP (t\in R_k)\ge 1-e^{-\sqrt{k}}$ for all $t\in I$. This and 
Fubini's theorem yield
\begin{equation} \label{eq-expected size of Rk}
\begin{split}
	\E[\lambda_N(R_k)]
	&= \E \Bigg[ \int_{\bB_{\eta}(t_o)} \1_{t \in R_k} \lambda_N(dt) \Bigg]
	= \int_{\bB_{\eta}(t_o)} \bP \left( t \in R_k \right) \lambda_N(dt)\\
	&\ge \lambda_N({\bB_{\eta}(t_o)}) \big(1-e^{-\sqrt{k}}\big),
\end{split}
\end{equation}
where $\lambda_N$ is the Lebesgue measure on $\R^N$. Also noting that $\lambda_N(R_k)\le \lambda_N({\bB_{\eta}(t_o)})$ 
a.s., by the Markov inequality and \eqref{eq-expected size of Rk}, one can derive that
\begin{align*}
	\bP \left(\lambda_N(R_k) < \lambda_N(\bB_{\eta}(t_o)) \big(1-e^{-\sqrt k/2}\big) \right)
	&= \bP \left(\lambda_N({\bB_{\eta}(t_o)}) - \lambda_N(R_k) > \lambda_N(\bB_{\eta}(t_o)) e^{-\sqrt k/2} \right) \\
	&\le \frac{ \E \big[ \lambda_N(\bB_{\eta}(t_o)) - \lambda_N(R_k) \big]} {\lambda_N({\bB_{\eta}(t_o)}) e^{-\sqrt k/2} }
	\le e^{-\sqrt k/2}.
\end{align*}
Thus, denoting
\begin{align*}
	\Omega_{k,1}:=
	\left\{\omega: \lambda_N(R_k) \ge \lambda_N(\bB_{\eta}(t_o)) \big( 1-e^{-\sqrt k/2} \big) \right\},
\end{align*}
we have
\begin{align} \label{eq-P Omega_{k1}}
	\sum_{k=1}^\infty \bP(\Omega_{k,1}^c)<\infty.
\end{align}

Recalling that $\delta_j > \alpha_j$ for $1 \le j \le N$, we can choose $\beta \in (0,1)$ such that $\beta < 
\delta_j \alpha_j^{-1} - 1$ for all $1 \le j \le N$.  Let
\begin{align*}
	\Omega_{k,2}=\big\{ \omega: |X(t_o')|\le 2^{k \beta} \big\}.
\end{align*}
Since $X(t_o')$ is a Gaussian random variable, we have
\begin{align} \label{eq-P Omega_{k2}}
	\sum_{k=1}^\infty \bP(\Omega_{k,2}^c)<\infty.
\end{align}
For $k \in \bN_+$, similar to $R_k$ given in  \eqref{eq-def-Rk}, we define the random set
\begin{equation}\label{eq-def-Rk'}
R_k'=\left\{ t\in \bB_{\eta}(t_o): \exists r \in [2^{-2k}, 2^{-k}], \sup_{s\in I^{(\epsilon_0)}:\Delta(s,t) < r} |X^1(s)-X^1(t)| 
	\le K_3\, r \left(\log \log \frac1r\right)^{-1/Q} \right\},
\end{equation}
where $K_3 = NK_1+K_2$. Note that on the event $\Omega_{k,2}$, by the triangle inequality and Lemma~\ref{Lem:X2}, 
we have for $t\in \bB_{\eta}(t_o)$ and $r\in[2^{-2k}, 2^{-k}]$ with $k$ being sufficiently large,
\begin{align} \label{eq-X to X1}
	&\sup_{s \in I^{(\epsilon_0)}:\Delta(s,t) < r} |X^1(s)-X^1(t)|\notag\\
	&\le \sup_{s  \in I^{(\epsilon_0)} :\Delta(s,t) < r} |X(s)-X(t)|
	+ \sup_{s  \in I^{(\epsilon_0)}:\Delta(s,t) < r} |X^2(s)-X^2(t)| \nonumber \\
	& \le  \sup_{s \in I^{(\epsilon_0)}:\Delta(s,t) < r} |X(s)-X(t)|
	+ K_1 |X(t_o')| \sup_{s \in I^{(\epsilon_0)}:\Delta(s,t) < r} \sum_{j=1}^N |s_j - t_j|^{\delta_j} \nonumber \\
	& = \sup_{s \in I^{(\epsilon_0)}:\Delta(s,t) < r} |X(s)-X(t)|
	+ K_1 |X(t_o')| \sup_{s \in I^{(\epsilon_0)}:\Delta(s,t) < r} \sum_{j=1}^N \big( |s_j - t_j|^{\alpha_j} \big)^{\delta_j \alpha_j^{-1}} \nonumber \\
	&\le \sup_{s \in I^{(\epsilon_0)}:\Delta(s,t) < r} |X(s)-X(t)|
	+ K_1 N 2^{k \beta} r^{\min_j\{\delta_j \alpha_j^{-1}\}} \nonumber \\
	&=  \sup_{s \in I^{(\epsilon_0)}:\Delta(s,t) < r} |X(s)-X(t)|
	+ K_1 N \big( 2^kr \big)^{\beta} r^{\min_j\{\delta_j \alpha_j^{-1}\} - \beta}.
\end{align}
Thus, by \eqref{eq-X to X1} and \eqref{eq-def-Rk}, noting $2^kr\le 1$ and $\min_j\{\delta_j\alpha_j^{-1}\}-\beta>1$, we have that  
for sufficiently large $k$, $R_k(\omega)\subset R_k'(\omega)$ for all $\omega\in \Omega_{k,2}$. Hence, $\Omega_{k,1} 
\cap \Omega_{k,2}\subset \Omega_{k,3}$ for sufficiently large $k$, where
\begin{align*}
	\Omega_{k,3}:= \left\{\omega: \lambda_N(R_k') 
	\ge \lambda_N(\bB_{\eta}(t_o)) \big(1-e^{-\sqrt k/2}\big)\right\}.
\end{align*}
Thus, by \eqref{eq-P Omega_{k1}} and \eqref{eq-P Omega_{k2}}, we have
\begin{align} \label{eq-P Omega_{k3}}
	\sum_{k=1}^\infty \bP(\Omega_{k,3}^c)
	\le \sum_{k=1}^\infty \bP(\Omega_{k,1}^c)
	+ \sum_{k=1}^\infty \bP(\Omega_{k,2}^c)
	<\infty.
\end{align}

The following lemma will be needed in the construction of a covering of the inverse image. It provides a nested family 
of subsets that shares similar properties with dyadic cubes in the Euclidean spaces, but is adapted to the anisotropic 
metric $\Delta$ in our setting. For ease of description, for every $q \in \bN_+$ we will call the sets $\{I_{q,l}\}$ in Lemma 
\ref{Lem:covering}  \emph{dyadic cubes of order $q$ in the metric $\Delta$}.

\begin{lemma} {\cite[Lemma 3.9]{Dalang2021}} \label{Lem:covering}
Let $T$ be a set in $\R^N$ equipped with the metric $\Delta$. There exist a constant $c_1 \in (0,1)$, a sequence 
$\{m_q: q \in \bN_+\}$ of positive numbers, and a family $\{I_{q,l}: 1 \le l \le m_q, q \in \bN_+\}$ of Borel subsets of $T$, 
such that
\begin{enumerate}
	\item[(i)] For all $q \in \bN_+$, $T = \bigcup_{l=1}^{m_q} I_{q,l}$.
	\item[(ii)] For $q_1 \ge q_2$, $1 \le l_1 \le m_{q_1}$, $1 \le l_2 \le m_{q_2}$, either $I_{q_1, l_1} \cap I_{q_2, l_2} 
	= \emptyset$ or $I_{q_1, l_1} \subset I_{q_2, l_2}$ holds.
	\item[(iii)] For each $q, l$, there exists $x_{q,l} \in T$ such that $\bB_{c_1 2^{-q-1}}(x_{q,l}) \subset I_{q,l} 
	\subset \bB_{2^{-q-1}}(x_{q,l})$ and $\{x_{q,l}: 1 \le l \le m_q\} \subset \{x_{q+1,l}: 1 \le l \le m_{q+1}\}$ for $q \in \bN_+$.
\end{enumerate}
\end{lemma}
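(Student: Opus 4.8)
This lemma is the specialization to the metric space $(T,\Delta)$, with dyadic dilation factor $\tfrac12$, of the classical construction of systems of dyadic cubes in a geometrically doubling metric space (M.\ Christ; Hyt\"onen--Kairema), and it is proved in \cite{Dalang2021} along those lines. The plan therefore has two parts. First, I would verify that $(\R^N,\Delta)$, and hence every $(T,\Delta)$ with $T\subset\R^N$, is a \emph{geometrically doubling} metric space; this is the only hypothesis the construction requires. Second, I would run the dyadic-cube construction, whose output is exactly (i)--(iii). One may assume $T$ is bounded (the only case used later, and the case in which each $m_q$ is finite); the unbounded case is identical with countable index sets.

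\emph{Step 1 (metric and doubling).} Since $x\mapsto x^{\alpha_j}$ is concave on $\R_+$ and vanishes at $0$, it is subadditive, so each $|s_j-t_j|^{\alpha_j}$ is a metric on $\R$ and $\Delta$ is a metric on $\R^N$. If $\Delta(s,t)\le\eta$ then $|s_j-t_j|\le\eta^{1/\alpha_j}$ for every $j$; conversely $|s_j-t_j|\le(\eta/N)^{1/\alpha_j}$ for every $j$ forces $\Delta(s,t)\le\eta$. Hence each $\Delta$-ball $\bB_\eta(t)$ is sandwiched between the Euclidean rectangles centred at $t$ with $j$-th half-side $(\eta/N)^{1/\alpha_j}$ and $\eta^{1/\alpha_j}$ respectively. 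Covering the outer rectangle of $\bB_{2\eta}(t)$ by a rectangular grid of the inner rectangles of radius-$\eta$ balls uses a number of pieces bounded by $M:=\prod_{j=1}^N\bigl(\lceil(2N)^{1/\alpha_j}\rceil+1\bigr)$, which depends only on $N$ and $\alpha_1,\dots,\alpha_N$. Thus $(\R^N,\Delta)$, and every subset with the induced metric, is geometrically doubling with constant at most $M$.

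\emph{Step 2 (nested nets and cubes).} Choose, inductively in $q\in\bN_+$, maximal $c_0 2^{-q-1}$-separated subsets $\mathcal N_q=\{x_{q,l}:1\le l\le m_q\}$ of $T$ with $\mathcal N_q\subset\mathcal N_{q+1}$: take any maximal $c_0 2^{-2}$-separated set, and at step $q+1$ extend the (in particular $c_0 2^{-q-2}$-separated) set $\mathcal N_q$ to a maximal $c_0 2^{-q-2}$-separated set by Zorn's lemma. Geometric doubling makes each $\mathcal N_q$ locally finite, hence finite since $T$ is bounded, and maximality makes it a $c_0 2^{-q-1}$-net of $T$. Assign to each $x_{q+1,m}$ a parent $x_{q,\pi(m)}\in\mathcal N_q$ by picking a $\Delta$-nearest center of $\mathcal N_q$ (ties broken by a fixed enumeration of $\bigcup_q\mathcal N_q$); this organizes $\bigcup_q\mathcal N_q$ into a forest with level-$q$ vertices $\mathcal N_q$. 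Following Christ / Hyt\"onen--Kairema, let $I_{q,l}$ be the Borel set of all $t\in T$ whose successive nearest-center assignments to $\mathcal N_q,\mathcal N_{q+1},\dots$ (chosen consistently with $\pi$ and with a fixed tie-breaking rule) descend through the subtree rooted at $x_{q,l}$. Property (i) (a partition of $T$ for each $q$) and the center-nesting in (iii) are built in; the consistency of $\pi$ makes the subtree rooted at a given vertex decrease as $q$ grows, which gives the nesting/disjointness dichotomy (ii); and because $\mathcal N_q$ is simultaneously $c_0 2^{-q-1}$-separated and a $c_0 2^{-q-1}$-net, the standard estimates yield $\bB_{c_1 2^{-q-1}}(x_{q,l})\subset I_{q,l}\subset\bB_{2^{-q-1}}(x_{q,l})$ for a sufficiently small $c_1\in(0,1)$ depending only on $M$, hence only on $N$ and $\alpha_1,\dots,\alpha_N$.

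\emph{Main obstacle.} The delicate point is (ii) together with the lower inclusion in (iii). A purely coordinatewise construction --- products over $j$ of dyadic intervals of generation $\lceil(q+1+\log_2 N)/\alpha_j\rceil$, intersected with $T$ --- does give an automatically nested family with $\Delta$-diameter $\le 2^{-q-1}$, but a cube that meets $\partial T$ may have its geometric center outside $T$, so no inner $\Delta$-ball survives inside $I_{q,l}$ and (iii) fails at the boundary. Removing this defect forces the intrinsic, net-based definition of Step 2, at which stage nesting is no longer free and must be engineered through the consistent parent map together with a subtree/limiting argument in the style of Christ; checking that the small inner balls are not eaten when the level-$q$ family is made pairwise disjoint and compatible with the coarser levels is the crux, and is exactly where the smallness of $c_1$ relative to the doubling constant $M$ is used. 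Steps 1--2 --- that $\Delta$ is a metric, that $(T,\Delta)$ is geometrically doubling, and that nested maximal separated sets exist --- are routine.
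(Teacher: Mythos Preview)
The paper does not supply its own proof of this lemma: it is stated as a direct citation of \cite[Lemma~3.9]{Dalang2021}, with no argument given in the body of the paper. Your sketch --- verifying that $(\R^N,\Delta)$ is geometrically doubling via the rectangle sandwich, then invoking the Christ / Hyt\"onen--Kairema nested-net construction --- is correct and is exactly the route taken in the cited reference, so there is nothing to add.
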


\

Now we are ready to prove Theorem \ref{Th:main}.

\begin{proof}[Proof of  Theorem \ref{Th:main}]\, As mentioned earlier, it is sufficient to prove $X^{-1}(F) \cap \bB_{\eta}(t_o) 
=\emptyset$ a.s.,  where $t_o\in I$ is fixed and $\eta> 0$ is a small constant. To this end, we construct a random covering 
for $X^{-1}(F) \cap \bB_{\eta}(t_o)$ by modifying the approach used in \cite{Talagrand95, Xiao97,dalang2017polarity}. We 
choose $T = \bB_{\eta}(t_o)$ in Lemma \ref{Lem:covering}, then there exists a family $\{I_{q,l}: 1 \le l \le m_q, q \in \bN_+\}$ 
of dyadic cubes in the metric $\Delta$ such that $\{I_{q,l}: 1 \le l \le m_q\}$ forms a covering of $\bB_{\eta}(t_o)$. For every 
$t\in T$ and $n \ge 1$, let $C_n(t)$ be the unique dyadic cube of order $n$ which contains $t$. Then by (iii) of Lemma 
\ref{Lem:covering}, for all $u,v \in C_n(t)$,
\begin{align}\label{e:distance-Cn}
	\Delta(u,v) < 2^{-n}.
\end{align}
We call $C_n(t)$ a \emph{good dyadic cube} of order $n$ if
\begin{equation}\label{eq-good-Cl}
	\sup_{u, v\in C_n(t)} |X^1(u)-X^1(v)|
	\le 8 K_3 2^{-n}(\log\log 2^n)^{-1/Q}.
\end{equation}

By Definition \eqref{eq-def-Rk'} of $R_k'$, we see that for each $t \in R_k'$, there exists $r \in [2^{-2k}, 2^{-k}]$ such that
\begin{align} \label{eq-0.18}
	\sup_{s\in I^{(\epsilon_0)}:\Delta(s,t) < r} |X^1(s)-X^1(t)| \le K_3 r \left(\log \log \frac1r\right)^{-1/Q}.
\end{align}
Assume $2^{-n} \le  r < 2^{-n+1}$, and it is easy to verify that $k \le n \le 2k$. 
By the triangle inequality and \eqref{eq-0.18}, we have 
\begin{align*}
	&\sup_{u, v\in C_n(t)} |X^1(u)-X^1(v)|
	\le 2 \sup_{u \in C_n(t)} |X^1(u)-X^1(t)| \\
	&\le 2 \sup_{u: \Delta(u,t) < r} |X^1(u)-X^1(t)| 
	\le 2 K_3 r \left(\log \log \frac1r\right)^{-1/Q} \\
	&\le 4 K_3 2^{-n} \Big(\log \big( \log 2^n - \log 2 \big) \Big)^{-1/Q} \\
	&\le 8 K_3 2^{-n} \Big(\log \log 2^n \Big)^{-1/Q}.
\end{align*}
This implies that for $t\in R_k'$, $C_n(t)$ is a good dyadic cube of order $n$ for some $n \in [k , 2k]$. 

Denote by $V_n$ the union of good dyadic cubes of order $n$, and let $U_k= \bigcup \limits_{n=k}^{2k} V_n$. 
 Then clearly $R_k'\subset U_k$, and hence $\big(\bB_{\eta}(t_o)\backslash U_k \big) \cap R_k'=\emptyset$.
We also denote by $\cH_1(k)$ the family of dyadic cubes contained in $U_k$. Note that $\bB_{\eta}(t_o)\backslash 
U_k$ is contained in a union of dyadic cubes of order $2k$, none of which meets $R_k'$, and let $\cH_2(k)$ 
denote the smallest family of such dyadic cubes. Recalling the definition of the dyadic cube in Lemma \ref{Lem:covering}, 
the volume of the dyadic cube of order $2k$ is at least $C_{\text{Vol}} 2^{-2kQ}$, where $C_{\text{Vol}}$ is a positive 
constant that only depends on $c_1$, $N$, $\alpha_1, \ldots, \alpha_N$. As the event $\Omega_{k,3}$ occurs, 
$\lambda_N(\bB_{\eta}(t_o)\backslash U_k) \le \lambda_N(\bB_{\eta}(t_o))-\lambda_N(R_k')\le \lambda_N(\bB_{\eta}(t_o)) 
e^{-\sqrt k/2},$ and thus the number of cubes in $\cH_2(k)$ is at most
\begin{equation}\label{eq-upper bound-H2}
	C_1 e^{-\sqrt k/2} \lambda_N(\bB_{\eta}(t_o)) \prod_{j=1}^N  2^{2k \alpha_j^{-1}} 
	= C_1 2^{2k Q} e^{-\sqrt k/2} \lambda_N(\bB_{\eta}(t_o)).
\end{equation}
Here, $C_1$ is a  positive constant depending on $C_{\text{Vol}}$, $N$ and $\alpha_1, \ldots, \alpha_N$.

Denote 
\[
\cH(k)=\cH_1(k) \cup \cH_2(k).
\] 
Then $\cH(k)$ is a random family of dyadic cubes of order $n$ for $k \le n \le 2k$ and clearly it only depends 
on $\{X^1(t), t\in \bB_{\eta}(t_o)\}$.  Let
\begin{align*}
	\cH=\bigcup_{k=1}^\infty \cH(k).
\end{align*}
Then $\cH$ is also $\Sigma_1$-measurable, where $\Sigma_1$ is the $\sigma$-field 
generated by $\{X^1(t), t\in \bB_{\eta}(t_o)\}$.

Now for any $A \in \cH$, where $A$ is a dyadic cube of order $n$,  define
\begin{equation}\label{eq-def-r_A}
	r_A=
	\begin{cases}
		8 K_3 2^{-n}(\log\log 2^n)^{- 1/Q}, &\mbox{ if } A \in \cH_1(k), ~ k \le n \le 2k, \\
		\frac12 K_4 2^{-n} \sqrt{n}, & \mbox{ if } A \in \cH_2(k), ~ n=2k, 
	\end{cases}
\end{equation} 
where $K_4$ is the constant given by Lemma \ref{Lem:X}. 
For every $A\in \cH$, we pick a distinguished point $t_A$ in $A$. Let 
\begin{equation}\label{eq-def-Omega_A}
	\Omega_A=\{ d(X(t_A), F)\le 2r_A\},
\end{equation}
where $d(x,F) =\inf_{y\in F} |x-y|$. Denote 
\begin{align}\label{def:Fk}
	\cF(k)= \{A\in \cH(k):~ \Omega_A \text{ occurs} \}.
\end{align}

Define the event
\begin{align*}
	\Omega_{k,4}
	=\bigg\{ \omega: \mbox{ for every dyadic cube $C_k$ of order $k$}, \sup_{s,t\in C_k} |X(s)-X(t)| \le K_4 2^{-k}\sqrt{k} \bigg\}.
\end{align*}
For $s,t \in C_k$, $\Delta(s,t) \le 2^{-k}$  by \eqref{e:distance-Cn}. Then we have
\begin{align} \label{eq-P Omega_{k4}}
	\sum_{k=1}^\infty \bP(\Omega_{k,4}^c)
	&= \sum_{k=1}^\infty \bP \bigg(\omega: \exists \mbox{ dyadic cube  $C_k$ of order $k$},  \, 
	\sup_{s,t\in C_k} |X(s)-X(t)| > K_4 2^{-k}\sqrt{k} \bigg) \nonumber \\
	&\le \sum_{k=1}^\infty \bP \bigg(\omega: \sup_{s,t\in I^{(\epsilon_0)}:\Delta(s,t)\le 2^{-k}} |X(s)-X(t)| > K_4 2^{-k}\sqrt{k} \bigg) 
	< \infty,
\end{align}
where the last inequality follows from Lemma \ref{Lem:X}.

Now define 
\begin{equation}\label{def:Ok}
\Omega_k =\Omega_{k,2} \cap \Omega_{k,3} \cap \Omega_{2k,4}.
\end{equation}
Then by \eqref{eq-P Omega_{k2}},
 \eqref{eq-P Omega_{k3}} and \eqref{eq-P Omega_{k4}}, we have
\begin{align*}
	\sum_{k=1}^\infty \bP(\Omega_{k}^c)<\infty,
\end{align*}
which, together with the Borel-Cantelli lemma, implies
\begin{align*}
	\bP \Big(\liminf_{k\to\infty} \Omega_k \Big)=1.
\end{align*}

We make the following two claims:

{\bf Claim 1.}  For $k$ large enough, on the event $\Omega_k$ defined in (\ref{def:Ok}), $\cF(k)$ covers 
$X^{-1}(F)\cap \bB_{\eta}(t_o)$, recalling that $\cF(k)$ is given in \eqref{def:Fk}. That is, for $k$ large enough, 
$\cF(k)$ is a random covering of $X^{-1}(F)\cap \bB_{\eta}(t_o)$ on $\Omega_k$. 
\begin{proof}[Proof of Claim 1]
For any $t\in X^{-1}(F)\cap \bB_{\eta}(t_o)$, $t$ lies in a cube in $\cH_1(k)\cup \cH_2(k)$. If $t\in A\in \cH_2(k)$, it 
follows directly from the definitions of $\Omega_{2k,4}$ and $r_A$ that
\begin{align*}
	d(X(t_A), F) \le d(X(t_A), X(t))
	\le K_3 2^{-2k}\sqrt{2k} = 2r_A.
\end{align*}

If $t \in A \in \cH_1(k)$ for some good dyadic cube $A$ of order $n$ with $k\le n\le 2k$, then for $s,r\in A$, $|s_j-r_j|
\le 2^{-\alpha_j^{-1}n}$. Recalling that $\beta < \min_{1 \le j \le N} \{ \delta_j \alpha_j^{-1} \} - 1$, by the triangle inequality, 
the definition of good dyadic cubes and Lemma \ref{Lem:X2}, for $k$ large enough,
\begin{align*}
	d(X(t_A), F)
	&\le |X(t_A) - X(t)| 
	\le |X^1(t_A)-X^1(t)| + |X^2(t_A)-X^2(t)| \\
	&\le 8 K_4 2^{-n}(\log\log 2^n)^{- 1/Q}
	+ K_1 |X(t_o')| \sup_{s,r \in A} \sum_{j=1}^N |s_j-r_j|^{\delta_j} \\
	&\le  r_A+ K_1 2^{k \beta} \sum_{j=1}^N 2^{- \delta_j \alpha_j^{-1} n}
	\le r_A + K_1N 2^{k \beta} 2^{-(1+\beta+\beta') n} \\
	&\le r_A+K_1N 2^{-n} 2^{-\beta' n} 
	\le 2r_A,
\end{align*}
where $\beta' = \min_{1 \le j \le N} \{\alpha_j^{-1}\delta_j \} - 1 - \beta>0$. Thus, in both cases  $t\in A\in \cF(k)$.
\end{proof}

{\bf Claim 2.} If there exist  constants $\theta \in [0, d]$, $\kappa \ge 0$, and some positive constant $C_F$ depending on $F$ only, 
such that   $\lambda_d(F^{(r)})\le C_F r^{d-\theta} \big(\log\log\frac1 r \big)^\kappa$
for all $r>0$ small, 
then for any $A\in \cH=\bigcup_{k=1}^\infty \cH(k)$,
\begin{align}\label{Eq:claim2}
	\bP(\Omega_A|\Sigma_1) \le K_5 r_A^{d-\theta} \Big(\log\log\frac1 {r_A}\Big)^\kappa
\end{align}
for some finite constant $K_5$.  We remark that for each $A\in\cH$, there exists an integer $k$ such that $A\in \cH(k)$ and, in this case,
$\bP(\Omega_A|\Sigma_1)=\bP(A\in \cF(k)  |\Sigma_1)$ by the definition \eqref{def:Fk} of $\cF(k)$. 

\begin{proof}[Proof of Claim 2]
 For $t \in \bB_{\eta}(t_o)$ and $1 \le i \le N$, it follows from  \eqref{e:X2} that 
	\begin{align*}
		X_i^2(t) = \bigg( 1+ \dfrac{\E \left[ (X_i(t) - X_i(t_o')) X_i(t_o') \right]}{\E \left[X_i(t_o')^2\right]} \bigg) X_i(t_o')
		= g_i(t) X_i(t_o').
	\end{align*}
Note that Assumptions {\bf (A2)} and {\bf (A3)} guarantee that $g_i(t)$ is bounded away from $0$ uniformly in $t \in \bB_{\eta}(t_o)$ 
and $1 \le i \le N$. Besides, $X_i(t_o')$ is a normal random variable. Hence, the joint probability density function of 
$X^2(t)$ is uniformly bounded in $t\in T$.
	
Therefore, for each $A\in \cH$, by the independence of $X^1$ and $X^2$, 
	\begin{align*}
		\bP(\Omega_A|\Sigma_1)
		&= \bP \bigg( \inf_{y \in F} |X(t_A) - y| \le 2r_A \Big| \Sigma_1 \bigg) \\
		&= \bP \bigg( \inf_{y \in F - X^1(t_A)} |X^2(t_A) - y| \le 2r_A \Big| \Sigma_1 \bigg) \\
		&= \bP \Big( X^2(t_A) \in \big( F - X^1(t_A) \big)^{(2r_A)} \Big| \Sigma_1 \Big) \\
		&\le K_5 r_A^{d-\theta}  \Big(\log\log\frac1 {r_A}\Big)^\kappa,
	\end{align*}
for some finite constant $K_5$. In deriving the last inequality, we have used the facts that  the joint density 
of $X^2(t)$ is uniformly bounded in $t \in \bB_{\eta}(t_o)$ and 
$$\lambda_d\Big(\big(F - X^1(t_A))^{(2r_A)}\Big)=\lambda_d \Big(F^{(2r_A)}\Big)\le
2^{d-\theta}C_F r_A^{d-\theta} \Big(\log\log\frac1 {r_A}\Big)^\kappa.$$
This verifies (\ref{Eq:claim2}).
\end{proof}

Define the function $\phi$ on $(0, \infty)$ by
\begin{equation}\label{fun:phi}
\phi(s) = s^{Q-d+\theta} \left(\log\log\frac1s\right)^{\frac{d-\theta} Q  -\kappa}.
\end{equation}
Notice that, under the assumption $\theta \in [0,\, d- Q]$ and $\kappa \in [0, \,(d-\theta)/Q)$, we have  
$\lim\limits_{s\to 0^+}\phi(s) =\infty.$

For any $A \in \cH$,  denote $D(A)=2^{-n}$ if $A\in C_n$. Since $\Omega_{k,3}$ is $\Sigma_1$-measurable, for $k$ large enough, it follows from (\ref{Eq:claim2}) in 
{\bf Claim 2} that
\begin{equation}\label{Eq:Ek3}
\begin{split}
	& \E \Bigg[ \1_{\Omega_{k,3}} \sum_{A\in \cF(k)} \phi(D(A)) \Bigg]
	= \E \Bigg[ \E \bigg[ \1_{\Omega_{k,3}} \sum_{A\in \cF(k)} \phi(D(A)) \bigg| \Sigma_1 \bigg] \Bigg]\\
	&=  \E \Bigg[ \1_{\Omega_{k,3}} \sum_{A\in \cH(k)} \E \Big[ \1_{A \in \cF(k)} \big| \Sigma_1 \Big] \phi(D(A)) \Bigg]\\
	&\le  K_5 \E \Bigg[ \1_{\Omega_{k,3}} \sum_{A\in \cH(k)} r_A^{d-\theta} \Big(\log\log\frac1 {r_A}\Big)^\kappa \phi(D(A)) \Bigg]\\
	 &\le  K_5 \E \Bigg[ \1_{\Omega_{k,3}} \sum_{A\in \cH(k)} r_A^{d-\theta} \Big(\log\log\frac1 {r_A}\Big)^\kappa D(A)^{Q-d+\theta} 
	 \Big(\log\log \frac1{D(A)}\Big)^{\frac{d-\theta} Q - \kappa} \Bigg].
\end{split}
\end{equation}
Recalling the definition \eqref{eq-def-r_A} of $r_A$, one can write
\begin{align*}
	r_A=
	\begin{cases}
		8 K_3 D(A) \bigg( \log\log \dfrac{1}{D(A)} \bigg)^{- 1/Q}, &\mbox{ if } A \in \cH_1(k), \vspace{0.3cm}	\\	
		\frac12K_4 D(A) \left(\log\frac1{D(A)}\right)^{1/2}, & \mbox{ if } A \in \cH_2(k). 
	\end{cases}
\end{align*}
To deal with the sum $\sum_{A\in \cH(k)}$, we will split it into the sum $\sum_{A\in \cH_1(k)} + \sum_{A\in \cH_2(k)}$. 
In order to avoid duplication, $\sum_{A\in \cH_1(k)}$ only sums over all good dyadic cubes $A$ that are not included 
in another good dyadic cube, and this is where we use the nested property of the cubes given by Lemma \ref{Lem:covering}.
For every $A\in \cH_1(k)$, one can verify that 
\begin{equation}\label{Eq:Ek4}
r_A^{d-\theta} \Big(\log\log\frac1 {r_A}\Big)^\kappa D(A)^{Q-d+\theta} 
	 \Big(\log\log \frac1{D(A)}\Big)^{\frac{d-\theta} Q - \kappa} \le K_6\, D(A)^Q,
\end{equation}
where $K_6$ is a constant depending only on $K_4$, $d$, $\theta$, and $\kappa$. Besides, recalling the definition 
of the dyadic cube in Lemma \ref{Lem:covering}, the volume of the dyadic cube $A$ is at least $C_{\text{Vol}} D(A)^Q$. 
For  every $A\in \cH_2(k)$, one can verify that 
\begin{equation}\label{Eq:Ek5}
\begin{split}
& r_A^{d-\theta} \Big(\log\log\frac1 {r_A}\Big)^\kappa D(A)^{Q-d+\theta} 
	 \Big(\log\log \frac1{D(A)}\Big)^{\frac{d-\theta} Q - \kappa} \\
	 &\le K_7\, D(A)^Q \left( \log \frac1{D(A)} \right)^{(d-\theta)/2} 
	\left(\log \log \frac1{D(A)} \right)^{(d-\theta)/Q},
	\end{split}
\end{equation}
where $K_7$ is a constant depending only on $K_3$, $d$, $\theta$, and $\kappa$. 

It follows from (\ref{Eq:Ek3}), (\ref{Eq:Ek4}), and (\ref{Eq:Ek5}) that for sufficiently large $k$ we have

\begin{equation}\label{Eq:Ek6}
\begin{split}
	&\E\bigg[\1_{\Omega_{k,3}} \sum_{A\in \cF(k) }\phi(D(A))\bigg] \\
	&\le K_5 K_6 \E \bigg[ \sum_{A\in \cH_1(k)} D(A)^Q \bigg] \\
	&  \qquad +  K_5 K_7 \E \Bigg[ \1_{\Omega_{k,3}} \sum_{A\in \cH_2(k)} D(A)^Q \left( \log \frac1{D(A)} \right)^{(d-\theta)/2} 
	\left(\log \log \frac1{D(A)} \right)^{(d-\theta)/Q}\Bigg]\\
	&\le  K_8  \lambda_N\big(\bB_{\eta}(t_o)\big) 
	 +  K_5 K_7 \E\Bigg[ \1_{\Omega_{k,3}} \sum_{A\in \cH_2(k)} 2^{-2kQ} \left( 2k\right)^{(d-\theta)/2} \left( \log (2k) \right)^{(d-\theta)/Q} \Bigg]\\
	&\le  K_9 \lambda_N\left(\bB_{\eta}(t_o)\right),
\end{split}
\end{equation}
where $ K_9$ is a constant and the last inequality follows from the upper bound \eqref{eq-upper bound-H2} of the 
size of $\cH_2(k)$ on $\Omega_{k,3}$. 

Now, let $\bar \Omega=\liminf\limits_{k\to \infty} \Omega_k$ and notice that $\mathbb P(\bar\Omega)=1$. 
We consider the following quantity related to $X^{-1}(F) \cap \bB_{\eta}(t_o)$:
\[
\phi\hbox{-}m\big(X^{-1}(F) \cap \bB_{\eta}(t_o)\big) := \liminf_{k \to \infty} \sum_{A\in \cF(k) } \phi( D(A)).
\]
To put this quantity in perspective, we mention that, when $Q - d +\theta  > 0$ (we do not consider this case in the present paper), 
it follows from {\bf Claim 1} that 
$\phi\hbox{-}m \big(X^{-1}(F) \cap \bB_{\eta}(t_o)\big)$  gives an upper bound for the $\phi$-Hausdorff measure 
of $X^{-1}(F) \cap \bB_{\eta}(t_o)$.

By Fatou's lemma and (\ref{Eq:Ek6}), we have 
\begin{align*} 
	 \E\left[ \phi\text{-}m\left(X^{-1}(F) \cap \bB_{\eta}(t_o)\right)\right]
	&\le  \E \bigg[ \liminf_{k\to\infty} \sum_{A\in \cF(k)} \phi(D(A)) \1_{\Omega_{k}} \bigg]\\
	&\le \liminf_{k\to\infty} \E \bigg[ \sum_{A\in \cF(k)} \phi(D(A)) \1_{\Omega_{k,3}} \bigg]
	<\infty.
\end{align*}
Thus, we have shown that $\phi\text{-}m\big(X^{-1}(F) \cap \bB_{\eta}(t_o)\big) <\infty$  almost surely. Observe that, if 
$\theta\le d-Q$, we have $\lim\limits_{s\to 0^+}\phi(s) = \infty$. This together with the finiteness of $\phi\text{-}m
\big(X^{-1}(F) \cap \bB_{\eta}(t_o)\big)$ forces $X^{-1}(F) \cap \bB_{\eta}(t_o)$ to be an empty set a.s. This finishes 
the proof of  Theorem~\ref{Th:main}.
\end{proof}

\section{Examples of Gaussian random fields}\label{sec:examples}

Theorem \ref{Th:main} obtained in Section \ref{sec:hitting} is applicable to a broad class of Gaussian random fields, 
including multiparameter fractional Brownian motions, fractional Brownian sheets, and the solutions of systems of linear 
stochastic heat and wave equations. In this section, we verify that these examples satisfy Assumptions {\bf (A1)}-{\bf (A3)} 
imposed in Theorem \ref{Th:main}.

\subsection{Multiparameter fractional Brownian motions}

A multiparameter fractional Brownian motion (or fractional Brownian field) with Hurst parameter $H \in (0, 1)$ 
is a centered $\R^d$-valued Gaussian random field $X = \{ X(t), t \in \R^N \}$ with 
continuous sample paths and covariance given by
$$
\E[X_j(s)X_k(t)] = \delta_{j,k} \frac 1 2 \left(|s|^{2H} + |t|^{2H} - |s-t|^{2H}\right),
$$
where $|\cdot|$ is the Euclidean norm in $\R^N$ and $\delta_{j,k}$ is the Kronecker symbol.

Regarding the hitting probabilities of $X$, Testard \cite{Testard86} and Xiao \cite{Xiao99} have proved the following results:
$$
\mathcal{C}_{d-N/H}(F) > 0 \Rightarrow \mathbb{P} \{ X(I) \cap F \ne \emptyset \} > 0
\Rightarrow \mathcal{H}_{d-N/H}(F) > 0,
$$
where $\mathcal{C}_\alpha$ denotes the Bessel-Riesz capacity of order $\alpha$
and $\mathcal{H}_\alpha$ denotes the $\alpha$-dimensional Hausdorff measure.
Dalang, Mueller and Xiao \cite{dalang2017polarity} have also discussed the polarity of points and proved 
that $X$ does not hit points in the critical dimension $d = N/H$.

Recall that the fractional Brownian motion $X$ admits the following integral representation (see \cite{Talagrand98, dalang2017polarity}):
$$
X(t) = C \int_{\R^N} \frac{1-\cos(t \cdot \xi)}{|\xi|^{H+N/2}} M_1(d\xi)
+ C \int_{\R^N} \frac{\sin(t \cdot \xi)}{|\xi|^{H+N/2}} M_2(d\xi),
$$
where $M_1$ and $M_2$ are independent Gaussian white noises on $\R^N$ with Lebesgue control measure,
and $C$ is a suitable constant. With this representation, we can define
$$
W(A, t) = C \int_{|\xi|^H \in A} \frac{1-\cos(t \cdot \xi)}{|\xi|^{H+N/2}} M_1(d\xi)
+ C \int_{|\xi|^H \in A} \frac{\sin(t \cdot \xi)}{|\xi|^{H+N/2}} M_2(d\xi)
$$
for $A \in \mathcal{B}(\R_+)$ and $t \in \R^N$. In \cite{dalang2017polarity}, it is shown in the proof of Theorem 6.1
that our condition {\bf (A1)} is satisfied with $a_0 = 0$ and $\gamma_j = H^{-1} - 1$ for $j = 1, \dots, N$. For every $t \in \R^N\backslash\{0\}$, 
the control measure $\nu_t$ of $W(\cdot, t)$  is given by
\[
\nu_t(A) = 2C^2 \int_{|\xi|^H \in A} \big( 1-\cos(t \cdot \xi)\big) \frac{d\xi} {|\xi|^{2H+N}}.
\]
Also, on any compact rectangle $I \subset \R^N \setminus \{0\}$, 
{\bf (A2)} and {\bf (A3)} are satisfied with $\delta_j = 1$ for all $j$.
Therefore, our Theorem \ref{Th:main} applies to the fractional Brownian motion
with $Q = N/H$ and improves Theorem~6.1 of \cite{dalang2017polarity}.

\subsection{Fractional Brownian sheets}

A fractional Brownian sheet with Hurst parameters $H_1, \dots, H_N \in (0, 1)$
is a centered, continuous, $\R^d$-valued Gaussian random field $\{ X(t), t \in \R^N_+ \}$ 
with covariance
$$
\E[X_j(s) X_k(t)] = \delta_{j,k} \prod_{i=1}^N \frac 1 2 \left(s_i^{2H_i} + t_i^{2H_i} - |s_i-t_i|^{2H_i}\right).
$$
When $H_i = 1/2$ for all $i$, $X$ is the Brownian sheet.
In this case, the result of Khoshnevisan and Shi \cite{khoshnevisan1999brownian} 
provides a complete characterization for the polar sets:
$F \subset \R^d$ is polar if and only if $\mathcal{C}_{d-2N}(F) = 0$.
It has been an open problem whether this result extends to fractional Brownian sheets.

In \cite[Section 5.1]{Dalang2021}, it is shown that the fractional Brownian sheet $X$ has the following representation:
$$
X(t) = C \sum_{p \in \{0, 1\}^N} \int_{\R^N} \prod_{j=1}^N \frac{f_{p_j}(t_j\xi_j)}{|\xi_j|^{H_j+1/2}} M_p(d\xi),
$$
where $f_0(x) = 1 - \cos(x)$, $f_1(x) = \sin(x)$,
$M_p$, $p \in \{0, 1\}^N$, are i.i.d.~$\R^d$-valued Gaussian white noises
on $\R^N$, and $C$ is a suitable constant.

Let $I = \prod_{j=1}^N[c_j, d_j]$ be a compact rectangle, where $0 < c_j < d_j < \infty$ ($j = 1, \dots, N$).
Set
$$
W(A, t) = C \sum_{p \in \{0, 1\}^N} \int_{\max_j |\xi_j|^{H_j} \in A} \prod_{j=1}^N \frac{f_{p_j}(t_j\xi_j)}{|\xi_j|^{H_j+1/2}} M_p(d\xi).
$$
By Lemma 5.1 of \cite{Dalang2021}, our condition {\bf (A1)} is satisfied with $a_0 = 0$ and $\gamma_j = H_j^{-1} - 1$ for $j = 1, \dots, N$.
It is clear that {\bf (A2)} is satisfied with $d_0 = \prod_{j=1}^N c_j^{2H_j} > 0$.
Also, Lemma 5.2 of \cite{Dalang2021} implies that {\bf (A3)} is satisfied with
$\delta_j = \min\{2H_j, 1\}$ for $j=1, \dots, N$.
Therefore, our Theorem \ref{Th:main} and Corollary \ref{Co:main} apply to the fractional Brownian sheet
with $Q = \sum_{j=1}^N H_j^{-1}$.

\subsection{Systems of linear stochastic heat equations}

For systems of linear and nonlinear stochastic heat equations, upper and lower 
bounds for hitting probabilities have been obtained by Dalang, Khoshnevisan and 
Nualart \cite{dalang2007hitting, dalang2009hitting, dalang2013hitting}.
Those bounds allow us to determine the polarity of $F \subset \R^d$ in non-critical dimensions. 
For the solution of the linear stochastic heat equation \eqref{she} below,
Dalang, Mueller and Xiao \cite[Theorem 7.1]{dalang2017polarity} have proved that
points are polar in its critical dimension, which is $d = (4+2N)/(2-\beta)$, where $\beta \in (0, 2 \wedge N)$
is the constant in (\ref{cov-colored}).
We can now use our main theorem to extend the latter result for a class of non-singleton sets $F$.

Let $u(t, x) = (u_1(t, x), \dots, u_d(t, x))$ be the solution of the following system of linear stochastic 
heat equations on $\R_+ \times \R^N$:
\begin{align}\label{she}
\begin{cases}
\frac{\partial}{\partial t} u_j(t, x) = \Delta u_j(t, x) + \dot M_j(t, x),& j =1, \dots, d,\\
u_j(0, x) = 0.
\end{cases}
\end{align}
We assume that $\dot M_1, \dots, \dot M_d$ are i.i.d.~Gaussian noises that are 
white in time and spatially homogeneous with spatial covariance given by the Riesz kernel, i.e., formally,
\begin{equation}\label{cov-colored}
\E[\dot M_j(t, x) \dot M_j(s, y)] = \delta(t-s) |x-y|^{-\beta}, \quad 0 < \beta < 2 \wedge N.
\end{equation}
If $N = 1$, it is also possible to take $\dot M_1, \dots, \dot M_d$ to be i.i.d.~space-time white noises 
(and set $\beta = 1$ in this case).

Let $\tilde M(d\tau, d\xi)$ be a $\bC^d$-valued space-time white noise, i.e., 
$\operatorname{Re}{\tilde M}$ and $\operatorname{Im}{\tilde M}$ are independent space-time white noises.
Define the Gaussian random field $X = \{X(t, x), (t, x) \in \R_+ \times \R^N\}$ by
$$
X(t, x) = \mathrm{Re} \int_\R \int_{\R^N} e^{-i\xi \cdot x} \frac{e^{-i\tau t} - e^{-t|\xi|^2}}{|\xi|^2 - i\tau} \,
\frac{\tilde M(d\tau, d\xi)}{|\xi|^{(N-\beta)/2} }.
$$
In \cite[Section 7]{dalang2017polarity}, it is shown that $X$ has the same law as the solution 
$u = \{ u(t, x), (t, x) \in \R_+ \times \R^N \}$ of \eqref{she}.
Also, it is shown that, for any compact rectangle $I$ in $(0, \infty) \times \R^N$, by setting
$$
W(A, t, x) = \mathrm{Re} \iint_{|\tau|^{\alpha_1}\vee |\xi|^{\alpha_2} \in A} e^{-i\xi \cdot x} \frac{e^{-i\tau t} - 
e^{-t|\xi|^2}}{|\xi|^2 - i\tau}\, \frac{\tilde M(d\tau, d\xi)}{|\xi|^{(N-\beta)/2} },
$$
our condition {\bf (A1)} is satisfied with $\gamma_j = \alpha_j^{-1} -1$
for $j = 1, \dots, 1+N$, where
$$
\alpha_1 = \frac{2-\beta}{4} \quad \text{and} \quad
\alpha_2 = \dots = \alpha_{1+N} = \frac{2-\beta}{2},
$$
and {\bf (A2)} and {\bf (A3)} are satisfied with $\delta_j = 1$ for all $j = 1, \dots, 1+N$.
Therefore, our Theorem \ref{Th:main} is applicable to the solution $u$ of \eqref{she}
with $Q = (4+2N)/(2-\beta)$.

The theorem can also be applied to systems of linear stochastic heat equations
with non-constant coefficients. 
Let $v(t, x) = (v_1(t, x), \dots, v_d(t, x))$, $(t, x) \in \R_+ \times \R^N$, be the solution of 
\begin{equation}\label{she2}
\begin{cases}
\frac{\partial}{\partial t} v_j(t, x) = \Delta v_j(t, x) + \sigma_j(t, x) \dot M_j(t, x), & j = 1, \dots, d,\\
v_j(0, x) = 0,
\end{cases}
\end{equation}
where $\dot M_1, \dots, \dot M_d$ are Gaussian noises as in \eqref{she},
and for each $j = 1, \dots, N$, $\sigma_j: \R_+ \times \R^N \to \R$
is a non-random continuous function such that for all $T > 0$, there exist $0 < c_T < C_T < \infty$ such that 
$c_T \le \sigma_j(t, x) \le C_T$ for all $(t, x) \in [0, T] \times \R^N$.
Define the Gaussian random field $X = \{X(t, x) = (X_1(t, x), \dots, X_d(t, x)), (t, x) \in \R_+ \times \R^N\}$ by
$$
X_j(t, x) = \mathrm{Re} \int_{\R} \int_{\R^N} (\Phi_{t, x} \ast \widehat \sigma_j)(\tau, \xi)
\,\frac{\tilde M(d\tau, d\xi)}{|\xi|^{(N-\beta)/2} },
$$
where $\widehat \sigma_j$ is the Fourier transform of $\sigma_j$ in the variables $(t, x)$ and
$$
\Phi_{t, x}(\tau, \xi) = e^{-i\xi \cdot x} \frac{e^{-i\tau t} - e^{-t|\xi|^2}}{|\xi|^2 - i\tau}.
$$
In \cite[Section 8]{dalang2017polarity}, it is shown that
$X$ has the same law as the solution 
$v = \{ v(t, x), (t, x) \in \R_+ \times \R^N \}$ of \eqref{she2}, and,
in addition, if the functions $\sigma_j$ satisfy Assumption 8.1 in 
\cite{dalang2017polarity}, then our conditions {\bf (A1)}-{\bf (A3)} are satisfied on 
any compact rectangle $I \subset (0, \infty) \times \R^N$.
In this case, our Theorem \ref{Th:main} is applicable to the solution $v$ of \eqref{she2}.

\subsection{Systems of linear stochastic wave equations}

For a class of nonlinear hyperbolic SPDEs driven by space-time white noise, 
Dalang and Nualart \cite{dalang2004potential} have given a complete 
characterization for a set to be polar. For systems of linear and nonlinear stochastic wave 
equations driven by white noise or colored noise, the polarity of sets in non-critical dimensions 
have been studied by Dalang and Sanz-Sol\'e \cite{dalang2010criteria, dalang2015hitting}.
The polarity of points in the critical dimension for the solution of the linear stochastic 
wave equation \eqref{swe} below has been solved by Dalang et al.~\cite[Theorem 9.1]{dalang2017polarity}, 
and we can now improve their results for non-singleton sets.

Consider the solution $u(t, x) = (u_1(t, x), \dots, u_d(t, x))$ of the following system of 
linear stochastic wave equations on $\R_+ \times \R^N$:
\begin{align}\label{swe}
\begin{cases}
\frac{\partial^2}{\partial t^2} u_j(t, x) = \Delta u_j(t, x) + \dot M_j(t, x),& j =1, \dots, d,\\
u_j(0, x) = 0, \quad \frac{\partial}{\partial t} u_j(0, x) = 0,
\end{cases}
\end{align}
where $\dot M_1, \dots, \dot M_d$ are Gaussian noises as in \eqref{she}
with $N = 1 = \beta$, or $N \ge 2$ and $1 \le \beta < 2 \wedge N$.

Let $\tilde M(d\tau, d\xi)$ be a $\bC^d$-valued space-time white noise,
and $\{ X(t, x), (t, x) \in \R_+ \times \R^N\}$ be the Gaussian random field defined by
$$
X(t, x) = \mathrm{Re} \int_\R \int_{\R^N} \frac{e^{-i\xi \cdot x - i \tau t}}{2|\xi|}
\left( \frac{1-e^{it(\tau+|\xi|)}}{\tau+|\xi|} - \frac{1-e^{it(\tau-|\xi|)}}{\tau - |\xi|} \right)
\frac{\tilde M(d\tau, d\xi)}{|\xi|^{(N-\beta)/2} }.
$$
In \cite[Section 9]{dalang2017polarity}, it is shown that $X$ has the same law as the solution
$u = \{ u(t, x), (t, x) \in \R_+ \times \R^N \}$ of \eqref{swe}.
It is also shown that, for any compact rectangle $I$ in $(0, \infty) \times \R^N$, 
by setting
$$
W(A, t, x) = \mathrm{Re} \iint_{|\tau|^\alpha \vee |\xi|^\alpha \in A} \frac{e^{-i\xi \cdot x - i \tau t}}{2|\xi|}
\left( \frac{1-e^{it(\tau+|\xi|)}}{\tau+|\xi|} - \frac{1-e^{it(\tau-|\xi|)}}{\tau - |\xi|} \right)
\frac{\tilde M(d\tau, d\xi)}{|\xi|^{(N-\beta)/2} },
$$
our condition {\bf (A1)} is satisfied with $\gamma_j = \alpha^{-1} -1$ for $j = 1, \dots, 1+N$, where
$$
\alpha = \frac{2-\beta}{2},
$$
and {\bf (A2)} and {\bf (A3)} are satisfied with $\delta_j = 2-\beta$ for all $j = 1, \dots, 1+N$.
Therefore, our Theorem \ref{Th:main} is applicable to the solution $u$ of \eqref{swe}
with $Q = (2+2N)/(2-\beta)$.

\subsection{Rescaled Gaussian processes}
We prove the following Proposition \ref{Prop-transform} for rescaled Gaussian processes. As an 
example of application,  it implies that the Ornstein-Uhlenbeck process also satisfies {\bf (A1)}-{\bf (A3)}.

\begin{proposition} \label{Prop-transform}
Let $X$ be the Gaussian random field that satisfies {\bf(A1)}-{\bf(A3)} on $I^{(\epsilon_0)}$ for some positive 
constant $\epsilon_0$. Let $f > 0$ and $g_1, \ldots, g_N$ be locally Lipschitz continuous functions mapping $\R^N$ to $\R$. 
Denote $g(t) = (g_1(t_1), \ldots, g_N(t_N))$. Assume that there exist a compact interval 
$\tilde I$ and a positive constant $\tilde \epsilon_0$, such that $g(\tilde I^{(\tilde \epsilon_0)})$ is contained 
in $I^{\epsilon_0}$. Then the Gaussian random field $\widetilde{X}(t) := f(t) X(g(t))$ satisfies {\bf (A1)}-{\bf (A3)} 
on $\tilde I^{(\tilde \epsilon_0)}$.
\end{proposition}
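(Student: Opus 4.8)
The plan is to verify each of the three assumptions {\bf (A1)}, {\bf (A2)}, {\bf (A3)} for $\widetilde X(t) = f(t) X(g(t))$ directly, by transporting the corresponding structures for $X$ through the maps $f$ and $g$ and controlling the errors introduced by multiplication by $f$ and by the reparametrization $g$.

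\medskip

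\textbf{Setting up (A1).} Let $\{W(A,s) : A \in \cB(\R_+), s \in I^{(\epsilon_0)}\}$ be the auxiliary Gaussian noise for $X$ with parameters $a_0, c_0, \gamma_1, \dots, \gamma_N$. The natural candidate for $\widetilde X$ is
\[
\widetilde W(A, t) := f(t)\, W(A, g(t)), \qquad A \in \cB(\R_+),\ t \in \tilde I^{(\tilde\epsilon_0)}.
\]
Condition (a1) is immediate: $\widetilde W(\R_+, t) = f(t) W(\R_+, g(t)) = f(t) X(g(t)) = \widetilde X(t)$, and since for fixed $t$ the map $A \mapsto W(A, g(t))$ is an $\R^d$-valued Gaussian noise with control measure $\nu_{g(t)}$, the map $A \mapsto \widetilde W(A,t)$ is one with control measure $f(t)^2 \nu_{g(t)}$; the independence-on-disjoint-sets property is inherited verbatim. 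For (a2) one estimates, for $a_0 \le a < b \le \infty$ and $s, t \in \tilde I^{(\tilde\epsilon_0)}$,
\[
\big\| \widetilde W([a,b),s) - \widetilde X(s) - \widetilde W([a,b),t) + \widetilde X(t) \big\|_{L^2}
\]
by inserting the intermediate term $f(t)\big(W([a,b),g(s)) - X(g(s))\big)$: the first piece is $\le |f(t)|\cdot\| W([a,b),g(s)) - X(g(s)) - W([a,b),g(t)) + X(g(t)) \|_{L^2}$, to which we apply the (A1)-bound for $X$ and then the local Lipschitz bound $|g_j(s_j) - g_j(t_j)| \le L_j |s_j - t_j|$ on the compact set $\tilde I^{(\tilde\epsilon_0)}$; the second piece is $|f(s) - f(t)|\cdot\|W([a,b),g(s)) - X(g(s))\|_{L^2}$, where the last $L^2$-norm is bounded (it is $\le \|W([a,b),g(s))-X(g(s)) - W([a,b),g(t_o))+X(g(t_o))\|_{L^2} + \|W([a,b),g(t_o)) - X(g(t_o))\|_{L^2}$, the first controlled by (A1) on the compact interval and the second bounded since it is a fixed finite quantity, or more simply one uses $b^{-1}$ plus a diameter term) and $|f(s)-f(t)| \le L_f \sum_j |s_j - t_j| \le L_f \sum_j a_0^{-\gamma_j} a^{\gamma_j}|s_j - t_j|$ after possibly enlarging $a_0$ so that $a_0 \ge 1$ (here we use that $a^{\gamma_j} \ge a_0^{\gamma_j}$). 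Collecting terms gives a bound of the form $\tilde c_0\big[\sum_j a^{\gamma_j}|s_j - t_j| + b^{-1}\big]$ with the \emph{same} exponents $\gamma_j$, so $\tilde\gamma_j = \gamma_j$ and hence $\tilde\alpha_j = \alpha_j$, $\tilde Q = Q$. The low-frequency bound on $\|\widetilde W([0,a_0),s) - \widetilde W([0,a_0),t)\|_{L^2}$ is handled the same way, using boundedness of $f$ and of $\|W([0,a_0),g(t))\|_{L^2}$ on the compact set, yielding a Lipschitz bound in $|s_j - t_j|$.

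\medskip

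\textbf{(A2) and (A3).} Since $\widetilde X_i(t) = f(t) X_i(g(t))$ and $f$ is continuous and strictly positive on the compact set $\overline{\tilde I^{(\tilde\epsilon_0)}}$, we have $f \ge c > 0$ there, so $\|\widetilde X_i(t)\|_{L^2} = |f(t)| \cdot \|X_i(g(t))\|_{L^2} \ge c\, d_0 =: \tilde d_0 > 0$, giving (A2). For (A3), fix $t \in \tilde I$; let $(g(t))' \in I^{(\epsilon_0)}$, $\{\delta_j(g(t))\} \subset (\alpha_j, 1]$, $C(g(t)) > 0$, and $\rho_0$ be the data supplied by (A3) for $X$ at the point $g(t)$. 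Define $t' := $ a point of $\tilde I^{(\tilde\epsilon_0)}$ with $g(t') = (g(t))'$ if $g$ is invertible enough, or, more robustly, simply take $t'$ with $g(t')$ close to $(g(t))'$; actually the cleanest route is to choose the reparametrized point so that $g(t') = (g(t))'$ when $g$ is a bijection onto its image, and otherwise to observe that (A3) only requires \emph{some} point $t'$ with the stated covariance bound — since $\widetilde X_i(t') = f(t') X_i(g(t'))$, we expand
\[
\E\big[\widetilde X_i(t')\big(\widetilde X_i(s) - \widetilde X_i(\bar s)\big)\big]
= \E\big[ f(t')X_i(g(t'))\big( f(s) X_i(g(s)) - f(\bar s) X_i(g(\bar s))\big)\big],
\]
then insert $\pm f(t') f(s) X_i(g(t')) X_i(g(\bar s))$ to split into $f(t')f(s)\,\E[X_i(g(t'))(X_i(g(s)) - X_i(g(\bar s)))]$ plus $f(t')(f(s)-f(\bar s))\,\E[X_i(g(t'))X_i(g(\bar s))]$. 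The first term is bounded, using (A3) for $X$, by $C' \sum_j |g_j(s_j) - g_j(\bar s_j)|^{\delta_j(g(t))} \le C'' \sum_j |s_j - \bar s_j|^{\delta_j(g(t))}$ via the Lipschitz bound on $g_j$ (and $\delta_j \le 1$ so Lipschitz raised to power $\delta_j$ is still $\le$ Lipschitz on a bounded set); the second term is bounded by $C'''\sum_j |s_j - \bar s_j| \le C''' \sum_j |s_j - \bar s_j|^{\delta_j(g(t))}$ since $\delta_j \le 1$ and the increments are small (here we use $|s_j-\bar s_j| \le |s_j - \bar s_j|^{\delta_j}$ when $|s_j - \bar s_j| \le 1$, which holds once we restrict to a small enough $\tilde\rho_0$). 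Setting $\tilde\delta_j := \delta_j(g(t)) > \alpha_j = \tilde\alpha_j$, $\tilde C(t)$ the accumulated constant, and $\tilde\rho_0$ small enough that $\max\{\Delta(t,s), \Delta(t,\bar s)\} \le 2\tilde\rho_0$ forces $\max\{\Delta(g(t), g(s)), \Delta(g(t), g(\bar s))\} \le 2\rho_0$ (possible by joint continuity / Lipschitz-type control of $g$ in the metric $\Delta$, using $\tilde\alpha_j = \alpha_j$ and $\delta_j \le 1$), we obtain (A3) for $\widetilde X$.

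\medskip

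\textbf{Main obstacle.} The only genuinely delicate point is the interplay between the metric $\Delta$ on the source space of $\widetilde X$ and the metric $\Delta$ on the source space of $X$ under the reparametrization $g$: one must check that a $\Delta$-ball around $t$ of radius $\tilde\rho_0$ maps into a $\Delta$-ball around $g(t)$ of radius $\le \rho_0$, i.e. that the local Lipschitz constants $L_j$ of $g_j$, together with the fact that $\tilde\alpha_j = \alpha_j$, let one absorb $\sum_j |g_j(s_j) - g_j(t_j)|^{\alpha_j} \le \sum_j L_j^{\alpha_j}|s_j - t_j|^{\alpha_j}$; this is routine but must be stated carefully because $g$ is only \emph{locally} Lipschitz, so one fixes once and for all a relatively compact neighborhood $\tilde I^{(\tilde\epsilon_0)}$ on which uniform Lipschitz constants exist. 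A secondary bookkeeping subtlety is the enlargement of $a_0$ to guarantee $a_0 \ge 1$ so that $|s_j - t_j| \le a_0^{-\gamma_j} a^{\gamma_j}|s_j-t_j|$ for all $a \ge a_0$; this is harmless since (A1) is monotone in $a_0$. Everything else is a direct transport of the bounds for $X$ composed with Lipschitz estimates for $f$ and $g$ and the elementary inequality $x^{\delta} \le x$ for $x \in [0,1]$, $\delta \in (0,1]$.
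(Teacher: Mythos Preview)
Your proposal is correct and follows essentially the same route as the paper: define $\widetilde W(A,t)=f(t)\,W(A,g(t))$, enlarge $a_0$ so that $a\ge 1$, and split every estimate via the triangle inequality into an ``$f$ held fixed'' piece (controlled by the corresponding bound for $X$ together with the Lipschitz bound on $g$) and an ``$f$-increment'' piece (controlled by the Lipschitz bound on $f$ times a uniformly bounded $L^2$-norm).

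One detail you glossed over: after enlarging $a_0$ to $\tilde a_0$, the second inequality in (a2) for $X$ is only stated for the \emph{original} $a_0$, so you cannot invoke it directly for $\|W([0,\tilde a_0),g(s))-W([0,\tilde a_0),g(t))\|_{L^2}$. The paper's device is to write $W([0,\tilde a_0),\cdot)=X(\cdot)-W([\tilde a_0,\infty),\cdot)$ and apply the \emph{first} inequality in (a2) with $a=\tilde a_0$, $b=\infty$, so that the $b^{-1}$ term vanishes and one obtains a genuine Lipschitz bound $\le c_0\sum_j \tilde a_0^{\gamma_j}|g_j(s_j)-g_j(t_j)|$. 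Your phrase ``handled the same way'' hides this small but necessary step.
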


\begin{proof}
Let $\{W(A,t):A \in \cB(\R_+), t \in \R^N\}$ be the Gaussian random field associated with $X$. Define the random field $\widetilde{W}$ by
\begin{align*}
	\widetilde{W}(A,t) = f(t) W(A,g(t)), \ \forall A \in \cB(\R_+), \, t \in \R^N.
\end{align*}
It is obvious that $\{\widetilde{W}(A,t):A \in \cB(\R_+), t \in \R^N\}$ is a Gaussian random field satisfying (a1) in the assumption {\bf (A1)}. 
Next, we verify condition (a2) in {\bf(A1)}. Let $\widetilde{a}_0 = 1 + a_0$, then for $s,t \in \tilde I^{(\tilde \epsilon_0)}$, $\widetilde{a}_0 \le 
a < b \le + \infty$, by the triangle inequality and the assumption that $X$ satisfies {\bf (A1)}, we have
\begin{align*}
	& \big\| \widetilde{W}([a,b),s) - \widetilde{X}(s) - \widetilde{W}([a,b),t) + \widetilde{X}(t) \big\|_{L^2} \nonumber \\
	=& \big\| f(s) W([a,b),g(s)) - f(s) X(g(s)) - f(t) W([a,b),g(t)) + f(t) X(g(t)) \big\|_{L^2} \nonumber \\
	\le& |f(s)| \big\| W([a,b),g(s)) - X(g(s)) - W([a,b),g(t)) + X(g(t)) \big\|_{L^2}  \nonumber \\
	&+ |f(s)-f(t)| \big\| W([a,b),g(t)) - X(g(t)) \big\|_{L^2} \nonumber \\
	\le& c_0|f(s)| \left[ \sum_{j=1}^N a^{\gamma_j} |g_j(s_j) - g_j(t_j)| + b^{-1} \right]
	+ |f(s)-f(t)| \big\| X(g(t)) \big\|_{L^2} \nonumber \\
	\le& c_0L \sup_{r \in I^{(\epsilon_0)}} |f(r)| \left[ \sum_{j=1}^N a^{\gamma_j} |s_j - t_j| + b^{-1} \right]
	+ L \sum_{j=1}^N |s_j - t_j| \sup_{t \in I^{(\epsilon_0)}} \big\| X(g(t)) \big\|_{L^2} \nonumber \\
	\le& C \left[ \sum_{j=1}^N a^{\gamma_j} |s_j - t_j| + b^{-1} \right],
\end{align*}
where we have used the Lipschitz continuity of $f, g_1, \ldots, g_N$, the boundedness of $\tilde I^{(\tilde \epsilon_0)}$, 
Lemma \ref{Lem:distance} and the fact that $a\ge \widetilde{a}_0 \ge 1$ in the last inequality. For the second inequality 
in (a2), by the triangle inequality, we have
\begin{align} \label{eq-a2-2}
	& \big\| \widetilde{W}([0,\widetilde{a}_0),s) - \widetilde{W}([0,\widetilde{a}_0),t) \big\|_{L^2} \nonumber \\
	=& \big\| f(s) W([0,\widetilde{a}_0),g(s)) - f(t) W([0,\widetilde{a}_0),g(t)) \big\|_{L^2} \nonumber \\
	\le& |f(s)| \big\|W([0,\widetilde{a}_0),g(s)) - W([0,\widetilde{a}_0),g(t)) \big\|_{L^2}
	+ |f(s)-f(t)| \big\| W([0,\widetilde{a}_0),g(t)) \big\|_{L^2}.
\end{align}
For the second term, by Lemma \ref{Lem:distance}, we have
\begin{align} \label{eq-a2-2-2nd}
	|f(s)-f(t)| \big\| W([0,\widetilde{a}_0),g(t)) \big\|_{L^2}
	\le L \sum_{j=1}^N |s_j - t_j| \sup_{t \in I^{(\epsilon_0)}} \big\| X(g(t)) \big\|_{L^2}
	\le C \sum_{j=1}^N |s_j - t_j|.
\end{align}
For the first term, since $X$ satisfies {\bf (A1)}, we have
\begin{align} \label{eq-a2-2-1st}
	& \big\|W([0,\widetilde{a}_0),g(s)) - W([0,\widetilde{a}_0),g(t)) \big\|_{L^2} \nonumber \\
	=& \big\| X(g(s)) - W([\widetilde{a}_0, \infty),g(s)) - X(g(t)) + W([\widetilde{a}_0, \infty),g(t)) \big\|_{L^2} \nonumber \\
	\le& c_0 \sum_{j=1}^N \widetilde{a}_0^{\gamma_j} |s_j - t_j|
	\le C \sum_{j=1}^N |s_j - t_j|.
\end{align}
The second inequality in (a2) is verified by substituting  \eqref{eq-a2-2-2nd} and \eqref{eq-a2-2-1st} to \eqref{eq-a2-2},

Noting that the continuity and the positivity of $f$ together with the compactness of $\overline{\tilde I^{(\tilde \epsilon_0)}}$ 
imply that $f$ is bounded away from $0$ on $\overline{\tilde I^{(\tilde \epsilon_0)}}$. Hence, $\widetilde{X}(t)$ satisfies {\bf (A2)}.

It remains to verify {\bf (A3)}. Noting that $f, g_1, \ldots, g_N$ are Lipschitz continuous, by the triangle inequality and the 
Cauchy-Schwarz inequality, we have
\begin{align*}
	& \Big| \E \left[ \widetilde{X}_i(t') \big( \widetilde{X}_i(s) - \widetilde{X}_i(\bar{s}) \big) \right] \Big| \nonumber \\
	=& |f(t')| \Big| \E \left[ X_i(g(t')) \big( f(s) X_i(g(s)) - f(\bar{s}) X_i(g(\bar{s})) \big) \right] \Big| \nonumber \\
	\le& |f(t')f(s)| \Big| \E \left[ X_i(g(t')) \big( X_i(g(s)) - X_i(g(\bar{s})) \big) \right] \Big|
	+ |f(t')| |f(s) - f(\bar{s})| \Big| \E \left[ X_i(g(t')) X_i(g(\bar{s})) \right] \Big| \nonumber \\
	\le& C \sup_{r \in I^{(\epsilon_0)}} |f(r)|^2 \sum_{j=1}^N \big| g_j(s_j) - g_j(\bar{s}_j) \big|^{\delta_j}
	+ \sup_{r \in I^{(\epsilon_0)}} |f(r)| L \sum_{j=1}^N |s_j - \bar{s}_j| \big\| X_i(g(t')) \big\|_{L^2} \big\| X_i(g(\bar{s})) \big\|_{L^2} \nonumber \\
	\le& C \sum_{j=1}^N |s_j - \bar{s}_j|^{\delta_j} + C \sum_{j=1}^N |s_j - \bar{s}_j|
	\le C \sum_{j=1}^N |s_j - \bar{s}_j|^{\delta_j},
\end{align*}
where the last inequality follows from $\delta_j \le 1$ for $1\le j\le N$. The proof is concluded. 
\end{proof}

As an example, we consider the Ornstein-Uhlenbeck process $X(t)$ defined by
\begin{align*}
	dX(t) = - \theta X(t) dt + \sigma dB(t),
\end{align*}
where $\theta$ and $\sigma$ are positive constants, and $B(t)$ is a 1-dimensional standard Brownian motion. 
It is well known that $X(t)$ can be represented as a time-space-rescaled Brownian motion, i.e., 
\begin{align*}
	\{X(t), t\in \R_+\} \overset d= \left\{\dfrac{\sigma}{\sqrt{2\theta}} e^{-\theta t} B(e^{2\theta t}), t\in\R_+\right\},
\end{align*}
where ``$\overset d=$'' means equality in distribution. Noting that $B(t)$ satisfies {\bf (A1)}-{\bf (A3)} on any compact 
interval on $\R\setminus \{0\}$ with $\gamma_1 = \delta_1 = 1$, one can show that the Ornstein-Uhlenbeck process $X(t)$ 
also satisfies {\bf (A1)}-{\bf (A3)}  with the same parameters by applying Proposition \ref{Prop-transform} with 
$f(t) = \frac{\sigma}{\sqrt{2\theta}} e^{-\theta t}$ and $g_1(t) = e^{2\theta t}$ .

\section{Collision of eigenvalues of random matrices} \label{sec:rm}

In this last section, we aim to apply our main result Theorem \ref{Th:main} to solve the problem on the 
collision of eigenvalues of random matrices that was left open in 
\cite{jaramillo2020collision, song2021collision}. 

Let $N \in \bN$ be fixed and consider a centered Gaussian random field $\xi = \{\xi(t): t \in \R_+^N \}$ defined on a probability 
space $(\Omega, \cF, \bP)$ with covariance given by
\begin{align*}
	\E \left[ \xi(s) \xi(t) \right] = C(s,t),
\end{align*}
for some non-negative definite function $C: \R_+^N \times \R_+^N \rightarrow \R$. Let $\{\xi_{i,j}, \eta_{i,j}:i,j \in \bN\}$ be a family of
 independent copies of $\xi$. For $\beta \in \{1,2\}$, and $d \in \bN$ with $d \ge 2$ fixed, consider the following $d\times d$ matrix-valued 
 process $X^{\beta} = \{X_{i,j}^{\beta}(t); t \in \R_+^N, 1 \le i,j \le d\}$ with entries given by
\begin{align} \label{def-entries}
	X_{i,j}^{\beta}(t) =
	\begin{cases}
	\xi_{i,j}(t) + \iota \1_{[\beta = 2]} \eta_{i,j}(t), & i < j; \\
	\sqrt{2} \xi_{i,i}(t), & i=j; \\
	\xi_{j,i}(t) - \iota \1_{[\beta = 2]} \eta_{j,i}(t), & i > j,
	\end{cases}
\end{align}
where $\iota := \sqrt{-1}$ is  the imaginary unit.  Clearly, for every $t \in  \R_+^N$, $X^{\beta}(t)$ is a real  symmetric matrix for 
$\beta=1$ and a  complex  Hermitian matrix for $\beta=2$.  In particular,  $X^1(t)/\sqrt{C(t,t)}$ belongs to  GOE 
and $X^2(t)/\sqrt{2C(t,t)}$ belongs to  GUE, respectively.

By the canonical identification, the matrix-valued process $X^{\beta}$ can be regarded as a Gaussian random field, still 
denoted by $X^{\beta}$, with values in  $\R^{d(d+1)/2}$ for $\beta = 1$ and in $\R^{d^2}$ for $\beta = 2$, respectively. The 
component processes of $X^\beta$ are independent, but are not identically distributed due to the constant factor of 
$\sqrt{2}$ in the diagonal entries. We denote by $D^\beta$ the invertible, diagonal matrix such that the Gaussian random field
$\widetilde{X}^\beta = D^\beta X^{\beta}$ has i.i.d. components. 

Let $A^1$ be a real symmetric deterministic matrix and $A^2$ be a complex Hermitian deterministic matrix. Suppose 
that $\{\lambda_1^{\beta}(t), \cdots,  \lambda_d^{\beta}(t)\}$ is the set of eigenvalues of 
\begin{align}\label{e:Y}
      Y^{\beta}(t) = A^{\beta} + X^{\beta}(t), \quad  (\beta=1,2).
\end{align}

Jaramillo and Nualart \cite{jaramillo2020collision} provided a necessary condition and a sufficient condition for the collision of 
eigenvalues of $Y^{\beta}$. The results were generalized by Song et al \cite{song2021collision} for the case where $k$ 
eigenvalues collide with $2\le k\le d$. More precisely, assuming that the associated Gaussian random field $\xi = \{\xi(t): t \in \R_+^N \}$ 
satisfies {(A1)} and {(A2)} in \cite{song2021collision}, we have, for the real case $\beta=1$:
\begin{enumerate}
	\item[(i)] if ~ $\sum_{j=1}^N\frac1{H_j} < (k+2)(k-1)/2$, then
	\begin{align}\label{Eq:nonC}
	\bP \left( \lambda_{i_1}^{\beta}(t) = \cdots = \lambda_{i_k}^{\beta}(t) \mathrm{\ for \ some \ } t \in I \ \mathrm{and} \ 
	1 \le i_1 < \cdots < i_k \le d \right) = 0;
	\end{align}
	\item[(ii)] if~ $\sum_{j=1}^N\frac1{H_j} > (k+2)(k-1)/2$, then
	\begin{align*}
		\bP \left( \lambda_{i_1}^{\beta}(t) = \cdots = \lambda_{i_k}^{\beta}(t) \mathrm{\ for \ some \ } t \in I \ \mathrm{and} \ 
		1 \le i_1 < \cdots < i_k \le d \right) > 0;
	\end{align*}
\end{enumerate}
for the complex case $\beta=2$:
\begin{enumerate}
	\item[(i)] if  $\sum_{j=1}^N\frac1{H_j} < k^2-1$, then
	\begin{align}\label{Eq:nonC'}
		\bP \left( \lambda_{i_1}^{\beta}(t) = \cdots = \lambda_{i_k}^{\beta}(t) \mathrm{\ for \ some \ } t \in I \ \mathrm{and} \ 
		1 \le i_1 < \cdots < i_k \le d \right) = 0;
	\end{align}
	\item[(ii)] if $\sum_{j=1}^N\frac1{H_j} > k^2-1$, then
	\begin{align*}
		\bP \left( \lambda_{i_1}^{\beta}(t) = \cdots = \lambda_{i_k}^{\beta}(t) \mathrm{\ for \ some \ } t \in I \ \mathrm{and} \ 
		1 \le i_1 < \cdots < i_k \le d \right) > 0.
	\end{align*}
\end{enumerate}
When $\sum_{j=1}^N\frac1{H_j} = (k+2)(k-1)/2$ for the real case and $\sum_{j=1}^N\frac1{H_j} = k^2-1$ for the complex case, 
the collision problems were left open by \cite{jaramillo2020collision} and \cite{song2021collision}.

Before studying the collision problem at the critical dimension, we first introduce some notations. We denote by $\mathbf{S}(d)$ 
and $\mathbf{H}(d)$ the set of real symmetric $d \times d$ matrices and the set of complex Hermitian $d \times d$ matrices, 
respectively. By the canonical identification, we have $\mathbf{S}(d) \simeq \R^{d(d+1)/2}$ and $\mathbf{H}(d) \simeq \R^{d^2}$. 
For $k \in \{1, \ldots, d\}$, let $\mathbf{S}(d;k)$ (resp. $\mathbf{H}(d;k)$) be the set of real symmetric (resp. complex Hermitian) 
$d \times d$ matrices with at least $k$ identical eigenvalues.

The following theorem solves the collision problem at the critical dimension for the real case $\beta=1$.

\begin{theorem} \label{Thm-hitting prob-real}
Let $Y^{\beta}$ ($\beta = 1$) be the matrix-valued process defined by (\ref{e:Y}) with eigenvalues $\{\lambda_1^\beta(t), \dots, 
\lambda_d^\beta(t)\}$. Assume the associated Gaussian random field $\xi = \{\xi(t): t \in \R_+^N \}$ satisfies {\bf (A1)}-{\bf (A3)}. For any 
$k\in\{2,\dots, d\}$, if~ $\sum_{j=1}^N\frac1{H_j} = (k+2)(k-1)/2$, then \eqref{Eq:nonC} holds.
\end{theorem}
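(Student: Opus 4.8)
The plan is to recast the non-collision statement \eqref{Eq:nonC} as a polarity statement for a Gaussian random field with i.i.d.\ components, and then invoke Theorem \ref{Th:main} (more precisely, Corollary \ref{Co:main}(i)). First I would note that, for a fixed $t$, the eigenvalues $\lambda_{i_1}^{1}(t),\dots,\lambda_{i_k}^{1}(t)$ coincide for some $1\le i_1<\cdots<i_k\le d$ if and only if $Y^1(t)=A^1+X^1(t)$ belongs to $\mathbf S(d;k)$. Under the identification $\mathbf S(d)\simeq\R^{D}$ with $D=d(d+1)/2$, and using the invertible diagonal matrix $D^1$ for which $\widetilde X^1=D^1X^1$ has i.i.d.\ components (each distributed as $\xi$), this event is equivalent to $\widetilde X^1(t)\in F$, where $F:=D^1\big(\mathbf S(d;k)-A^1\big)$. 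Thus \eqref{Eq:nonC} amounts to the assertion that $F$ is polar for $\widetilde X^1$ on $I$.

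Second I would check that $\widetilde X^1$ satisfies {\bf (A1)}--{\bf (A3)} on $I^{(\epsilon_0)}$. This is immediate from the hypothesis that $\xi$ does: the $D$ components of $\widetilde X^1$ are independent copies of $\xi$, so one equips $\widetilde X^1$ with the $\R^{D}$-valued Gaussian noise whose components are independent copies of the noise associated with $\xi$, and the parameters $\gamma_j,\delta_j$ and the non-degeneracy constant are inherited without change; in particular $\alpha_j=H_j$, and hence $Q=\sum_{j=1}^N 1/\alpha_j=\sum_{j=1}^N 1/H_j$, which by hypothesis equals $(k+2)(k-1)/2$. Since $(k+2)(k-1)/2<d(d+1)/2=D$ for all $2\le k\le d$, the requirement $d\ge Q$ holds (strictly), so we are in the regime $d>Q$ of Corollary \ref{Co:main}(i).

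Third, and this is the substantive geometric step, I would verify that $F$ has Hausdorff dimension $D-Q$ and satisfies \eqref{Con:F} with $\theta=D-Q$ and $\kappa=0$. Because $F$ is the image of $\mathbf S(d;k)$ under an invertible affine map, it inherits the stratified structure of $\mathbf S(d;k)$: the latter is a finite union of smooth (real-analytic) eigenvalue-multiplicity strata, the top one — matrices with one eigenvalue of multiplicity exactly $k$ and the remaining ones simple — having dimension $D-(k+2)(k-1)/2=D-Q$, and all others of strictly smaller dimension. This codimension count is classical and is precisely what underlies the analysis in \cite{jaramillo2020collision,song2021collision}. For a relatively compact piece of a smooth $m$-dimensional submanifold of $\R^{D}$, the tube (Weyl) formula bounds the volume of its $r$-neighborhood by $O(r^{D-m})$; summing over the finitely many strata and noting that lower strata contribute $O(r^{D-m'})$ with $m'<D-Q$, we get, for any bounded set $B$,
\[
\lambda_D\big((F\cap B)^{(r)}\big)\le C_B\, r^{D-\theta},\qquad \theta:=D-Q,
\]
for all small $r$, i.e.\ \eqref{Con:F} with $\theta=D-Q\in[0,D-Q]$ and $\kappa=0\in[0,(D-\theta)/Q)=[0,1)$. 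Here both constraints hold (the first at its endpoint), and this is exactly where the critical identity $\sum_{j=1}^N 1/H_j=(k+2)(k-1)/2$ is used.

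Finally I would remove the boundedness assumption by a routine localization: since $\widetilde X^1$ is a.s.\ continuous and $I$ is compact, $\widetilde X^1(I)\subset\overline{B(0,R)}$ a.s.\ for some random $R$, so
\[
\bP\big(\exists\, t\in I:\ \widetilde X^1(t)\in F\big)\le\sum_{R=1}^{\infty}\bP\big(\exists\, t\in I:\ \widetilde X^1(t)\in F\cap\overline{B(0,R)}\big),
\]
and each $F\cap\overline{B(0,R)}$ is bounded, of Hausdorff dimension $D-Q$, and satisfies \eqref{Con:F}, so Corollary \ref{Co:main}(i) yields $(\widetilde X^1)^{-1}\big(F\cap\overline{B(0,R)}\big)\cap I=\emptyset$ a.s.; summing over $R$ gives \eqref{Eq:nonC}. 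The main obstacle is the geometric content of the third step — identifying the codimension of the top stratum of $\mathbf S(d;k)$ as $(k+2)(k-1)/2$ and upgrading box-dimension information to the sharp neighborhood-volume bound in \eqref{Con:F} with $\kappa=0$ — but since $\mathbf S(d;k)$ is a finite union of smooth manifolds this reduces to the tube formula, and the codimension count is already available in \cite{jaramillo2020collision,song2021collision}. The complex case $\beta=2$ is entirely analogous, with $\mathbf H(d;k)$ in place of $\mathbf S(d;k)$ and codimension $k^2-1$.
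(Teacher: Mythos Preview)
Your proposal is correct and follows essentially the same route as the paper: reduce the collision event to $\widetilde X^1=D^1X^1$ hitting the affine image of $\mathbf S(d;k)$, verify condition \eqref{Con:F} with $\theta=D-Q$ and $\kappa=0$, apply Theorem~\ref{Th:main}, and localize to bounded sets. The only cosmetic difference is in the geometric step: the paper cites Lemmas~2.1 and~2.3 of \cite{song2021collision}, which provide a smooth parametrization $G$ with $\mathbf S(d;k)\subseteq\mathrm{Im}(G)$ and the correct dimension for $\mathrm{Im}(G)\cap[-M,M]^{D}$, whereas you argue via the eigenvalue-multiplicity stratification and the tube formula; both yield the same volume bound $\lambda_D(F^{(r)})=O(r^{Q})$.
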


\begin{proof}
It follows from Lemma 2.1 and Lemma 2.3 in \cite{song2021collision} that for any $M > 0$, the set 
$$
{\mathbf S}(d;k) \cap [-M, M]^{d(d+1)/2} \subseteq \mathrm{Im}(G)\cap [-M, M]^{d(d+1)/2}, $$
where $G : \R^{d+k-1}\times  \R^{\frac12[d(d-1) - k(k-1)]} \rightarrow {\mathbf S}(d)$ is a smooth function and the set 
$\mathrm{Im}(G)\cap [-M, M]^{d(d+1)/2}$ has positive and finite  $\big(\frac12[d(d+1) - k(k+1)]+1\big)$-dimensional 
Lebesgue measure. In the case of critical dimension (i.e., $\sum_{j=1}^N\frac1{H_j} = (k+2)(k-1)/2$),
$$\frac {d(d+1)} 2 -\sum_{j=1}^N \frac1{H_j} = \frac1 2 [d(d+1) - k(k+1)]+1,$$
we can verify that $(\mathrm{Im}(G) - A^{\beta}) \cap [-M, M]^{d(d+1)/2}$ and its image under the linear operator $D^\beta$ 
satisfy condition (\ref{Con:F}) of Theorem \ref{Th:main} with 
$\theta =\frac{ d(d+1)} 2 - Q$ and $\kappa = 0$. Applying Theorem \ref{Th:main} to the Gaussian random field $\widetilde{X}^{\beta}
= D^\beta X^\beta$
and $F = D^\beta \big(\big( \mathrm{Im}(G) - A^{\beta} \big)\cap  [-M, M]^{d(d+1)/2} \big)$, we obtain
\[
\begin{split}
&\bP \left( X^{\beta}(I) \cap \big( \mathrm{Im}(G) - A^{\beta} \big)\cap  [-M, M]^{d(d+1)/2} \not= \emptyset \right)\\
&= \bP \left( \widetilde{X}^{\beta}(I) \cap D^\beta\Big( \big(\mathrm{Im}(G) - A^{\beta} \big)\cap  [-M, M]^{d(d+1)/2}\Big) \not= \emptyset \right)= 0.
\end{split}
\]
Therefore
\begin{equation}\label{Eq:Hit-up}
\begin{split}
	& \bP \left( \lambda_{i_1}^{\beta}(t) = \cdots = \lambda_{i_k}^{\beta}(t) \mathrm{\ for \ some \ } t \in I \ \mathrm{and} \ 
	1 \le i_1 < \cdots < i_k \le d \right) \\
	&= \bP \left( Y^{\beta}(t) \in {\mathbf S}(d;k) \mathrm{\ for \ some \ } t \in I \right) \\
	&= \bP \left( X^{\beta}(t) \in \big( {\mathbf S}(d;k)  - A^{\beta} \big) \mathrm{\ for \ some \ } t \in I \right) \\
	&\le \bP \left( X^{\beta}(t) \in \big( \mathrm{Im}(G) - A^{\beta} \big) \mathrm{\ for \ some \ } t \in I \right) \\
	&= \bP \left( X^{\beta}(I) \cap \big( \mathrm{Im}(G) - A^{\beta} \big) \not= \emptyset \right)\\
	&= \lim_{M\to \infty} \bP \left( X^{\beta}(I) \cap \big( \mathrm{Im}(G) - A^{\beta} \big)\cap  [-M, M]^{d(d+1)/2} \not= \emptyset \right)= 0.
\end{split}
\end{equation}
This proves the non-existence of the $k$-collision of the eigenvalues.
\end{proof}

In Section \ref{sec:examples}, we have seen that the multiparameter fractional Brownian motions, fractional Brownian sheets,
 solutions to linear stochastic heat equations, and the Ornstein-Uhlenbeck processes satisfy the conditions {\bf (A1)}-{\bf (A3)}. 
 Hence, Theorem \ref{Thm-hitting prob-real} 
is applicable to these models.

\begin{corollary} \label{Coro-real}
Let $Y^{\beta}$ ($\beta = 1$) be the matrix-valued process defined by (\ref{e:Y}) with eigenvalues $\{\lambda_1^\beta(t), \dots, 	\lambda_d^\beta(t)\}$. The associated Gaussian random field $\xi = \{\xi(t): t \in \R_+^N \}$ is multiparameter fractional Brownian motion, fractional Brownian 
sheet, solution to linear stochastic heat equation, or the Ornstein-Uhlenbeck process. For any $k\in\{2,\dots, d\}$, if~ $\sum_{j=1}^N\frac1{H_j} 
= (k+2)(k-1)/2$, then \eqref{Eq:nonC} holds.
\end{corollary}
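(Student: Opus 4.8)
The plan is to reduce Corollary \ref{Coro-real} entirely to Theorem \ref{Thm-hitting prob-real}: that theorem already gives \eqref{Eq:nonC} at the critical dimension for any matrix-valued process whose underlying scalar field $\xi = \{\xi(t), t \in \R_+^N\}$ satisfies Assumptions {\bf (A1)}--{\bf (A3)}, so the only remaining task is to check that each of the four named processes --- multiparameter fractional Brownian motion, fractional Brownian sheet, the solution of the linear stochastic heat equation \eqref{she}, and the Ornstein--Uhlenbeck process --- does satisfy {\bf (A1)}--{\bf (A3)}, and that the exponent $\sum_{j=1}^N H_j^{-1}$ appearing in the hypothesis coincides with the quantity $Q = \sum_{j=1}^N \alpha_j^{-1}$ of \eqref{e:Q} that drives the critical-dimension condition in Theorem \ref{Th:main}.

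First I would recall, case by case, the verifications already carried out in Section \ref{sec:examples}. For the multiparameter fractional Brownian motion of index $H$ on a compact rectangle $I \subset \R^N\setminus\{0\}$, {\bf (A1)}--{\bf (A3)} hold with $\gamma_j = H^{-1}-1$ and $\delta_j = 1$, hence $\alpha_j = H$ and $Q = N/H$. For the fractional Brownian sheet with Hurst indices $H_1,\dots,H_N$ on $I \subset (0,\infty)^N$, they hold with $\gamma_j = H_j^{-1}-1$ and $\delta_j = \min\{2H_j,1\}$, hence $\alpha_j = H_j$ and $Q = \sum_{j=1}^N H_j^{-1}$. For the solution of \eqref{she} on a compact rectangle in $(0,\infty)\times\R^N$, they hold with $\alpha_1 = (2-\beta)/4$, $\alpha_2=\dots=\alpha_{1+N}=(2-\beta)/2$, so that $Q = (4+2N)/(2-\beta)$. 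Finally, for the Ornstein--Uhlenbeck process, Proposition \ref{Prop-transform} transfers {\bf (A1)}--{\bf (A3)} from Brownian motion with $\gamma_1 = \delta_1 = 1$, i.e. $\alpha_1 = 1/2$ and $Q = 2$. In each instance I would identify the exponents $H_j$ in the statement with these $\alpha_j$'s (with the convention $H_j \equiv H$ for the isotropic fractional Brownian motion, and $N = 1$, $H_1 = 1/2$ for the Ornstein--Uhlenbeck process), so that the hypothesis $\sum_{j=1}^N H_j^{-1} = (k+2)(k-1)/2$ reads exactly as $Q = (k+2)(k-1)/2$.

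With these identifications in hand the conclusion is immediate: the hypotheses of Theorem \ref{Thm-hitting prob-real} are met for $Y^1 = A^1 + X^1$ defined by \eqref{e:Y}, and so \eqref{Eq:nonC} follows. I would only add a short remark recalling that, inside the proof of Theorem \ref{Thm-hitting prob-real}, one passes from $\xi$ to the matrix-valued field $X^1$ (whose components are rescaled independent copies of $\xi$) and then to $\widetilde{X}^1 = D^1 X^1$ (with i.i.d. components); both passages preserve {\bf (A1)}--{\bf (A3)} with the same exponents $\alpha_j$, so the relevant value of $Q$ is unchanged and the critical-dimension identity is preserved.

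There is no genuinely hard step: the corollary is a direct specialization of Theorem \ref{Thm-hitting prob-real} and the work is pure bookkeeping. The one point that demands attention --- and which I would state explicitly --- is the matching of conventions, namely that the Hurst-type parameters $H_j$ used to phrase the critical condition in \cite{jaramillo2020collision, song2021collision} are precisely the $\alpha_j = (1+\gamma_j)^{-1}$ produced by the verification of {\bf (A1)} in each model, so that $\sum_{j=1}^N H_j^{-1}$ is literally the $Q$ of Theorem \ref{Th:main}. Once this is confirmed from the computations recalled in Section \ref{sec:examples}, nothing further is required.
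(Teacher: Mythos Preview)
Your proposal is correct and follows exactly the paper's approach: the corollary is stated immediately after the sentence ``In Section \ref{sec:examples}, we have seen that the multiparameter fractional Brownian motions, fractional Brownian sheets, solutions to linear stochastic heat equations, and the Ornstein-Uhlenbeck processes satisfy the conditions {\bf (A1)}-{\bf (A3)}. Hence, Theorem \ref{Thm-hitting prob-real} is applicable to these models,'' and no further proof is given. Your write-up is in fact more detailed than the paper's, since you explicitly match the parameters $H_j$ with the $\alpha_j$'s from Section \ref{sec:examples} to confirm that $\sum_j H_j^{-1} = Q$.
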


\begin{remark}
When $k=2$ and the associated Gaussian random field $\xi$ is (fractional) Brownian motion, Theorem \ref{Thm-hitting prob-real} 
recovers the non-collision property for the symmetric matrix Brownian motion (see e.g. \cite{anderson2010}).
Similarly, when $k=2$ and $\xi$ is  Ornstein-Uhlenbeck process,  Theorem \ref{Thm-hitting prob-real} recovers the non-collision 
property obtained in \cite{mckean} for the real symmetric matrix Ornstein-Uhlenbeck process.
\end{remark}

The following is the analogue of Theorem \ref{Thm-hitting prob-real} for the complex case $\beta=2$.
\begin{theorem} \label{Thm-hitting prob-complex}
Let $Y^{\beta}$ ($\beta = 2$) be the matrix-valued process defined by (\ref{e:Y}) with eigenvalues $\{\lambda_1^\beta(t), \dots, 
\lambda_d^\beta(t)\}$. Assume the associated Gaussian random field $\xi = \{\xi(t): t \in \R_+^N \}$ satisfies {\bf (A1)}-{\bf (A3)}. 
For any $k\in\{2,\dots, d\}$, if~ $\sum_{j=1}^N\frac1{H_j} = k^2-1$, then \eqref{Eq:nonC'} holds.
\end{theorem}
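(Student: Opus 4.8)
The plan is to mirror the argument used for the real case in Theorem \ref{Thm-hitting prob-real}, replacing the geometric input for $\mathbf{S}(d;k)$ with the corresponding facts for $\mathbf{H}(d;k)$. First I would invoke the analogues of Lemma 2.1 and Lemma 2.3 of \cite{song2021collision} in the complex Hermitian setting: for any $M > 0$, the set $\mathbf{H}(d;k) \cap [-M,M]^{d^2}$ is contained in $\mathrm{Im}(G) \cap [-M,M]^{d^2}$ for a smooth map $G$ from a Euclidean space of dimension $(d^2 - (k^2 - 1))$ into $\mathbf{H}(d) \simeq \R^{d^2}$, and $\mathrm{Im}(G) \cap [-M,M]^{d^2}$ has positive finite $(d^2 - (k^2-1))$-dimensional Lebesgue measure. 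Concretely, a Hermitian matrix with $k$ coinciding eigenvalues is parametrized by the common eigenvalue, the remaining $d-k$ eigenvalues, and the flag of the corresponding eigenspaces; counting real dimensions gives $d - k + 1$ for the eigenvalue data plus $\frac12[\,d(d-1) - k(k-1)\,]$ for the unitary orbit part $\dots$ no, in the complex case the orbit contributes $d^2 - k^2 - (d-k)$ real dimensions, so the total is $d^2 - k^2 + 1 = d^2 - (k^2 - 1)$. This is exactly the codimension-controlled set we need.

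Next I would match dimensions with the critical hypothesis. Under $\sum_{j=1}^N \frac{1}{H_j} = k^2 - 1$, and recalling that here $Q = \sum_{j=1}^N \alpha_j^{-1} = \sum_{j=1}^N H_j^{-1}$ when $\widetilde X^\beta = D^\beta X^\beta$ has the Hurst parameters of $\xi$, the ambient dimension is $d = d^2$ (the real dimension of $\mathbf{H}(d)$) and one computes
\[
d^2 - Q = d^2 - (k^2 - 1),
\]
which is precisely the Lebesgue dimension of $\mathrm{Im}(G) \cap [-M,M]^{d^2}$. Therefore the set $F_M := D^\beta\big((\mathrm{Im}(G) - A^\beta) \cap [-M,M]^{d^2}\big)$, being contained in a Lipschitz image of a bounded set of dimension $d^2 - Q$, satisfies the Minkowski-type bound \eqref{Con:F} with $\theta = d^2 - Q$ and $\kappa = 0$; indeed for such a set $\lambda_{d^2}(F_M^{(r)}) \le C_F r^{d^2 - \theta} = C_F r^Q$, and $\kappa = 0 < (d^2 - \theta)/Q = 1$. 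I would then verify that $\xi$ satisfying {\bf (A1)}--{\bf (A3)} forces the canonically identified $\R^{d^2}$-valued field $X^2$, and hence $\widetilde X^2 = D^2 X^2$ (componentwise i.i.d.\ after the diagonal rescaling), to satisfy {\bf (A1)}--{\bf (A3)} as well — this is the same bookkeeping as in the real case, using that $D^\beta$ is a fixed invertible diagonal matrix.

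With these pieces in place, Theorem \ref{Th:main} applies to the field $\widetilde X^2$ and the bounded set $F_M$, giving $\widetilde X^2(I) \cap F_M = \emptyset$ a.s., equivalently $X^2(I) \cap (\mathrm{Im}(G) - A^2) \cap [-M,M]^{d^2} = \emptyset$ a.s. Finally I would chain the inclusions exactly as in \eqref{Eq:Hit-up}:
\[
\begin{split}
&\bP\big(\lambda_{i_1}^2(t) = \cdots = \lambda_{i_k}^2(t) \text{ for some } t \in I,\ 1 \le i_1 < \cdots < i_k \le d\big)\\
&= \bP\big(Y^2(t) \in \mathbf{H}(d;k) \text{ for some } t \in I\big) = \bP\big(X^2(t) \in \mathbf{H}(d;k) - A^2 \text{ for some } t \in I\big)\\
&\le \bP\big(X^2(I) \cap (\mathrm{Im}(G) - A^2) \ne \emptyset\big) = \lim_{M \to \infty} \bP\big(X^2(I) \cap (\mathrm{Im}(G) - A^2) \cap [-M,M]^{d^2} \ne \emptyset\big) = 0,
\end{split}
\]
where monotone convergence is used in the last limit. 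This establishes \eqref{Eq:nonC'}. The main obstacle, and the only nontrivial point beyond transcribing the real-case argument, is confirming that the Hermitian stratification lemmas of \cite{song2021collision} produce a smooth parametrizing map $G$ with domain of the exact real dimension $d^2 - (k^2-1)$ and with $\mathrm{Im}(G) \cap [-M,M]^{d^2}$ of positive finite Lebesgue measure in that dimension; once the dimension count $d^2 - Q = d^2 - (k^2 - 1)$ is pinned down at criticality, the verification of \eqref{Con:F} with $\theta = d^2 - Q$, $\kappa = 0$ and the application of Theorem \ref{Th:main} are routine.
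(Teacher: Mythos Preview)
Your proposal is correct and follows essentially the same approach as the paper's own proof: the paper invokes Lemma~3.2 and Lemma~3.4 of \cite{song2021collision} (the complex Hermitian analogues you anticipated) to obtain a smooth map $\widehat{G}:\R^{d-k+1}\times\R^{d^2-d-k^2+k}\to\mathbf{H}(d)$ whose image, intersected with $[-M,M]^{d^2}$, has finite $(d^2-k^2+1)$-dimensional Lebesgue measure, verifies condition~\eqref{Con:F} with $\theta=d^2-Q$ and $\kappa=0$ at criticality, applies Theorem~\ref{Th:main} to $\widetilde{X}^\beta=D^\beta X^\beta$, and then chains the inclusions exactly as you wrote. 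Your dimension count for the domain of $\widehat{G}$ matches the paper's.
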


\begin{proof}
It follows from Lemma 3.2 and Lemma 3.4 in \cite{song2021collision} that 	for any $M > 0$, the set 
${\mathbf H}(d;k) \cap [-M, M]^{d^2} \subseteq \mathrm{Im}(\widehat{G})\cap [-M, M]^{d^2} $, which has finite 
$(d^2-k^2+1)$-dimensional Lebesgue measure. Here $\widehat{G}: \R^{d-k+1} \times \R^{d^2 - d - k^2 + k} \rightarrow {\mathbf H}(d)$
is a smooth function defined in  \cite[Lemma 3.2]{song2021collision}. 

In the case of critical dimension (i.e., $\sum_{j=1}^N\frac1{H_j} = k^2-1$),
$$d^2 -\sum_{j=1}^N \frac1{H_j} = d^2-k^2+1,$$
we see that $(\mathrm{Im}(\widehat{G}) - A^{\beta}) \cap [-M, M]^{d(d+1)/2}$ satisfies condition  (\ref{Con:F})  of Theorem \ref{Th:main} 
with $\theta = d^2 - Q$ and $\kappa = 0$. 
As in the proof of Theorem \ref{Thm-hitting prob-real}, We apply Theorem \ref{Th:main} to the Gaussian random field 
$\widetilde{X}^{\beta}$ to obtain
\[
\bP \left( X^{\beta}(I) \cap \big( \mathrm{Im}(\widehat{G} ) - A^{\beta} \big) \cap  [-M, M]^{d(d+1)/2} \not= \emptyset \right)= 0.
\]
Therefore
\begin{equation}\label{Eq:Hit-up'}
	\begin{split}
		& \bP \left( \lambda_{i_1}^{\beta}(t) = \cdots = \lambda_{i_k}^{\beta}(t) \mathrm{\ for \ some \ } t \in I \ \mathrm{and} \ 
		1 \le i_1 < \cdots < i_k \le d \right) \\
		&= \bP \left( Y^{\beta}(t) \in {\mathbf H}(d;k) \mathrm{\ for \ some \ } t \in I \right) \\
		&= \bP \left( X^{\beta}(t) \in \big( {\mathbf H}(d;k)  - A^{\beta} \big) \mathrm{\ for \ some \ } t \in I \right) \\
		&\le \bP \left( X^{\beta}(t) \in \big( \mathrm{Im}(\widehat{G}) - A^{\beta} \big) \mathrm{\ for \ some \ } t \in I \right) \\
		&= \bP \left( X^{\beta}(I) \cap \big( \mathrm{Im}(\widehat{G}) - A^{\beta} \big) \not= \emptyset \right)\\
		&= \lim_{M\to \infty} \bP \left( X^{\beta}(I) \cap  \big( \mathrm{Im}(\widehat{G}) - A^{\beta} \big)\cap [-M, M]^{d^2} 
		 \not= \emptyset \right)= 0.
	\end{split}
\end{equation}
This proves the non-existence of the $k$-collision of the eigenvalues.
\end{proof}

 Similar to the real case, we have the following result as a corollary of  Theorem
 \ref{Thm-hitting prob-complex}. 
\begin{corollary} \label{Coro-complex}
Let $Y^{\beta}$ ($\beta = 2$) be the matrix-valued process defined by (\ref{e:Y}) with eigenvalues $\{\lambda_1^\beta(t), \dots, \lambda_d^\beta(t)\}$. The associated Gaussian random field $\xi = \{\xi(t): t \in \R_+^N \}$ is  multiparameter fractional Brownian motion,
fractional Brownian sheet,  solution of linear stochastic heat equation, or Ornstein-Uhlenbeck process.  
For any $k\in\{2,\dots, d\}$, if~ $\sum_{j=1}^N\frac1{H_j} = k^2-1$, then \eqref{Eq:nonC'} holds.
\end{corollary}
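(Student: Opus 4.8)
The plan is to obtain Corollary~\ref{Coro-complex} as a direct application of Theorem~\ref{Thm-hitting prob-complex}, whose sole hypothesis on the driving data is that the scalar Gaussian field $\xi = \{\xi(t): t \in \R_+^N\}$ satisfies Assumptions~\textbf{(A1)}--\textbf{(A3)}. Hence the entire task reduces to checking those assumptions for each of the four families, which has already been carried out in Section~\ref{sec:examples}: for a multiparameter fractional Brownian motion of Hurst index $H$ on a compact rectangle $I \subset \R^N\setminus\{0\}$, \textbf{(A1)} holds with $a_0 = 0$, $\gamma_j = H^{-1}-1$ and \textbf{(A2)}--\textbf{(A3)} with $\delta_j = 1$, so $Q = N/H$; for a fractional Brownian sheet with Hurst parameters $H_1,\dots,H_N$ on a compact rectangle with positive coordinates, \textbf{(A1)} holds with $a_0 = 0$, $\gamma_j = H_j^{-1}-1$, \textbf{(A2)} with $d_0 = \prod_j c_j^{2H_j}$ and \textbf{(A3)} with $\delta_j = \min\{2H_j,1\}$, so $Q = \sum_j H_j^{-1}$; for the solution of \eqref{she} on a compact rectangle in $(0,\infty)\times\R^N$, \textbf{(A1)}--\textbf{(A3)} hold with $\gamma_j = \alpha_j^{-1}-1$, $\delta_j = 1$, $\alpha_1 = (2-\beta)/4$, $\alpha_2 = \dots = \alpha_{1+N} = (2-\beta)/2$, so $Q = (4+2N)/(2-\beta)$; and for the Ornstein--Uhlenbeck process, Proposition~\ref{Prop-transform} applied to Brownian motion with $f(t) = \tfrac{\sigma}{\sqrt{2\theta}}e^{-\theta t}$, $g_1(t) = e^{2\theta t}$ gives \textbf{(A1)}--\textbf{(A3)} with $\gamma_1 = \delta_1 = 1$, so $Q = 2$.

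Given this, I would fix $k \in \{2,\dots,d\}$ and assume $\sum_{j=1}^N H_j^{-1} = k^2-1$; by the list above this is precisely the statement $Q = k^2-1$ in each of the four cases, which is exactly the critical-dimension identity $d^2 - Q = d^2 - k^2 + 1$ for the $\R^{d^2}$-valued field $\widetilde{X}^{\beta} = D^{\beta}X^{\beta}$ ($\beta = 2$) of Section~\ref{sec:rm}. Since the components of $\widetilde{X}^{\beta}$ are i.i.d.\ copies of $\xi$ (the factor $\sqrt 2$ on the diagonal entries having been absorbed into $D^{\beta}$) and \textbf{(A1)}--\textbf{(A3)} are componentwise conditions, $\widetilde{X}^{\beta}$ inherits them from $\xi$ with the same parameters and the same $Q$. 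Therefore the hypotheses of Theorem~\ref{Thm-hitting prob-complex} are met, and invoking it yields \eqref{Eq:nonC'} at once.

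There is no genuine obstacle here, only bookkeeping to watch. For the heat equation the ``time--space'' index is $(1+N)$-dimensional, so it plays the role of the $N$-dimensional parameter in the random-matrix setup and the sum $\sum_j H_j^{-1}$ must be read as $Q = \sum_j \alpha_j^{-1}$. And for the Ornstein--Uhlenbeck process $Q = 2$, so the constraint $k^2-1 = 2$ has no integer solution and the complex statement is vacuous in that case (whereas in the real case $(k+2)(k-1)/2 = 2$ forces $k = 2$, recovering the classical non-collision of the eigenvalues of the symmetric Ornstein--Uhlenbeck matrix). Everything else is immediate from Theorem~\ref{Thm-hitting prob-complex} and the verifications in Section~\ref{sec:examples}.
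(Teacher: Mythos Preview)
Your proposal is correct and matches the paper's own approach: the corollary is stated as an immediate consequence of Theorem~\ref{Thm-hitting prob-complex} once Section~\ref{sec:examples} has verified \textbf{(A1)}--\textbf{(A3)} for each of the four listed models. Your additional observation that the Ornstein--Uhlenbeck case is vacuous in the complex setting (since $Q=2$ and $k^2-1=2$ admits no integer $k\ge 2$) is a nice remark that the paper does not make explicit.
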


{\bf Acknowledgements}   J. Song is partially supported by Shandong University (Grant No. 11140089963041) and 
the National Natural Science Foundation of China (Grant No. 12071256). Y. Xiao is supported in part by the NSF 
grant DMS-1855185.

\vspace{1cm}

\bibliographystyle{plain}
\bibliography{Reference.bib}

@article{kakutani1944,
  title={131. On Brownian Motions in n-Space},
  author={Kakutani, Shizuo},
  journal={Proceedings of the Imperial Academy},
  volume={20},
  number={9},
  pages={648--652},
  year={1944},
  publisher={The Japan Academy}
}

@article{hinojosa2021anisotropic,
  title={Anisotropic Gaussian random fields: Criteria for hitting probabilities and applications},
  author={Hinojosa-Calleja, Adri{\'a}n and Sanz-Sol{\'e}, Marta},
  journal={Stochastics and Partial Differential Equations: Analysis and Computations},
  pages={1--47},
  year={2021},
  publisher={Springer}
}

@book{mckean,
  title={Stochastic integrals},
  author={McKean, Henry P},
  volume={353},
  year={1969},
  publisher={American Mathematical Soc.}
}

@article{rs93,
	AUTHOR = {Rogers, L. C. G. and Shi, Z.},
	TITLE = {Interacting {B}rownian particles and the {W}igner law},
	JOURNAL = {Probab. Theory Related Fields},
	FJOURNAL = {Probability Theory and Related Fields},
	VOLUME = {95},
	YEAR = {1993},
	NUMBER = {4},
	PAGES = {555--570},
	ISSN = {0178-8051},
	MRCLASS = {60K35},
	MRNUMBER = {1217451},
	MRREVIEWER = {Mu Fa Chen},
	// DOI = {10.1007/BF01196734},
	// URl = {https://doi.org/10.1007/BF01196734},
}

@article{nualart2014eigenvalue,
  title={On the eigenvalue process of a matrix fractional Brownian motion},
  author={Nualart, David and P{\'e}rez-Abreu, Victor},
  journal={Stochastic Processes and their Applications},
  volume={124},
  number={12},
  pages={4266--4282},
  year={2014},
  publisher={Elsevier}
}

@book{anderson2010,
  title={An introduction to random matrices},
  author={Anderson, Greg W and Guionnet, Alice and Zeitouni, Ofer},
  volume={118},
  year={2010},
  publisher={Cambridge university press}
}

@article{Dyson1962,
  title={A Brownian-motion model for the eigenvalues of a random matrix},
  author={Dyson, Freeman J},
  journal={Journal of Mathematical Physics},
  volume={3},
  number={6},
  pages={1191--1198},
  year={1962},
  publisher={American Institute of Physics}
}

@book{LT91,
  title={Probability in Banach spaces: isoperimetry and processes},
  author={Ledoux, Michel and Talagrand, Michel},
  year={1991},
  publisher={Springer}
}

@article {Xiao97,
    AUTHOR = {Xiao, Yimin},
     TITLE = {H\"{o}lder conditions for the local times and the {H}ausdorff
              measure of the level sets of {G}aussian random fields},
   JOURNAL = {Probab. Theory Related Fields},
  FJOURNAL = {Probability Theory and Related Fields},
    VOLUME = {109},
      YEAR = {1997},
    NUMBER = {1},
     PAGES = {129--157},
      ISSN = {0178-8051},
   MRCLASS = {60G15 (60G17)},
  MRNUMBER = {1469923},
MRREVIEWER = {Robert J. Adler},
       DOI = {10.1007/s004400050128},
       URL = {https://doi.org/10.1007/s004400050128},
}

@article {Talagrand95,
    AUTHOR = {Talagrand, Michel},
     TITLE = {Hausdorff measure of trajectories of multiparameter fractional
              {B}rownian motion},
   JOURNAL = {Ann. Probab.},
  FJOURNAL = {The Annals of Probability},
    VOLUME = {23},
      YEAR = {1995},
    NUMBER = {2},
     PAGES = {767--775},
      ISSN = {0091-1798},
   MRCLASS = {60G15 (60J65)},
  MRNUMBER = {1334170},
MRREVIEWER = {Pierre Vallois},
       URL = {http://links.jstor.org/sici?sici=0091-1798(199504)23:2<767:HMOTOM>2.0.CO;2-P&origin=MSN},
}

@article {Xiao99,
    AUTHOR = {Xiao, Yimin},
     TITLE = {Hitting probabilities and polar sets for fractional {B}rownian
              motion},
   JOURNAL = {Stochastics Stochastics Rep.},
  FJOURNAL = {Stochastics and Stochastics Reports},
    VOLUME = {66},
      YEAR = {1999},
    NUMBER = {1-2},
     PAGES = {121--151},
      ISSN = {1045-1129},
   MRCLASS = {60G15 (60G17)},
  MRNUMBER = {1687811},
       DOI = {10.1080/17442509908834189},
       URL = {https://doi.org/10.1080/17442509908834189},
}

@article{khoshnevisan1999brownian,
  title={Brownian sheet and capacity},
  author={Khoshnevisan, Davar and Shi, Zhan},
  journal={The Annals of Probability},
  volume={27},
  number={3},
  pages={1135--1159},
  year={1999},
  publisher={Institute of Mathematical Statistics}
}

@article{song2021collision,
  title={On collision of multiple eigenvalues for matrix-valued Gaussian processes},
  author={Song, Jian and Xiao, Yimin and Yuan, Wangjun},
  journal={Journal of Mathematical Analysis and Applications},
  volume={502},
  number={2},
  pages={125261},
  year={2021},
  publisher={Elsevier}
}

@article{jaramillo2020collision,
  title={Collision of eigenvalues for matrix-valued processes},
  author={Jaramillo, Arturo and Nualart, David},
  journal={Random Matrices: Theory and Applications},
  volume={9},
  number={04},
  pages={2030001},
  year={2020},
  publisher={World Scientific}
}

@article{dalang2017polarity,
  title={Polarity of points for Gaussian random fields},
  author={Dalang, Robert C. and Mueller, Carl and Xiao, Yimin},
  journal={The Annals of Probability},
  volume={45},
  number={6B},
  pages={4700--4751},
  year={2017},
  publisher={Institute of Mathematical Statistics}
}

@article {BLX2009,
	AUTHOR = {Bierm\'{e}, Hermine and Lacaux, C\'{e}line and Xiao, Yimin},
	TITLE = {Hitting probabilities and the {H}ausdorff dimension of the	inverse images of anisotropic {G}aussian random fields},
	JOURNAL = {Bull. Lond. Math. Soc.},
	FJOURNAL = {Bulletin of the London Mathematical Society},
	VOLUME = {41},
	YEAR = {2009},
	NUMBER = {2},
	PAGES = {253--273},
	ISSN = {0024-6093},
	MRCLASS = {60G60 (28A80 60G15 60G17)},
	MRNUMBER = {2496502},
	MRREVIEWER = {Anatoliy Malyarenko},
	DOI = {10.1112/blms/bdn122},
	URL = {https://doi.org/10.1112/blms/bdn122},
}

@incollection{Xiao2009sample,
  title={Sample path properties of anisotropic Gaussian random fields},
  author={Xiao, Yimin},
  booktitle={A minicourse on stochastic partial differential equations},
  pages={145--212},
  year={2009},
  publisher={Springer}
}

@book {Falconer2014,
	AUTHOR = {Falconer, Kenneth},
	TITLE = {Fractal geometry},
	EDITION = {Third},
	NOTE = {Mathematical foundations and applications},
	PUBLISHER = {John Wiley \& Sons, Ltd., Chichester},
	YEAR = {2014},
	PAGES = {xxx+368},
	ISBN = {978-1-119-94239-9},
	MRCLASS = {28-01 (11K55 28A78 28A80 37C45 37F10)},
	MRNUMBER = {3236784},
	MRREVIEWER = {Manuel Mor\'{a}n},
}

@article {Dalang2021,
	AUTHOR = {Dalang, Robert C. and Lee, Cheuk Yin and Mueller, Carl and
	Xiao, Yimin},
	TITLE = {Multiple points of {G}aussian random fields},
	JOURNAL = {Electron. J. Probab.},
	FJOURNAL = {Electronic Journal of Probability},
	VOLUME = {26},
	YEAR = {2021},
	PAGES = {Paper No. 17, 25},
	MRCLASS = {60G15 (60G17 60G22 60G60 60H15)},
	MRNUMBER = {4235468},
	DOI = {10.1214/21-EJP589},
	URL = {https://doi.org/10.1214/21-EJP589},
}

@article{Talagrand98,
AUTHOR = {Talagrand, Michel},
     TITLE = {Multiple points of trajectories of multiparameter fractional
              {B}rownian motion},
   JOURNAL = {Probab. Theory Related Fields},
  FJOURNAL = {Probability Theory and Related Fields},
    VOLUME = {112},
      YEAR = {1998},
    NUMBER = {4},
     PAGES = {545--563},
      ISSN = {0178-8051},
   MRCLASS = {60G15 (28A78 60G17 60G18 60G60)},
  MRNUMBER = {1664704},
}

@book{Testard86,
  title={Processus gaussiens: polarit{\'e}, points multiples, g{\'e}om{\'e}trie},
  author={Testard, Fr{\'e}d{\'e}ric},
  year={1986},
  publisher={Publ. du Laboratoire de Statistique et Probabilit{\'e}s, Universit{\'e} Paul-Sabatier, Toulouse}
}

@article {dalang2007hitting,
    AUTHOR = {Dalang, Robert C. and Khoshnevisan, Davar and Nualart,
              Eulalia},
     TITLE = {Hitting probabilities for systems of non-linear stochastic
              heat equations with additive noise},
   JOURNAL = {ALEA Lat. Am. J. Probab. Math. Stat.},
  FJOURNAL = {ALEA. Latin American Journal of Probability and Mathematical
              Statistics},
    VOLUME = {3},
      YEAR = {2007},
     PAGES = {231--271},
   MRCLASS = {60H15 (60J45)},
  MRNUMBER = {2365643},
}

@article{dalang2009hitting,
AUTHOR = {Dalang, Robert C. and Khoshnevisan, Davar and Nualart,
              Eulalia},
     TITLE = {Hitting probabilities for systems for non-linear stochastic
              heat equations with multiplicative noise},
   JOURNAL = {Probab. Theory Related Fields},
  FJOURNAL = {Probability Theory and Related Fields},
    VOLUME = {144},
      YEAR = {2009},
    NUMBER = {3-4},
     PAGES = {371--427},
      ISSN = {0178-8051},
   MRCLASS = {60H15 (60G60 60H07 60J45)},
  MRNUMBER = {2496438},
}

@article{dalang2013hitting,
AUTHOR = {Dalang, Robert C. and Khoshnevisan, Davar and Nualart,
              Eulalia},
     TITLE = {Hitting probabilities for systems of non-linear stochastic
              heat equations in spatial dimension {$k\geq 1$}},
   JOURNAL = {Stoch. Partial Differ. Equ. Anal. Comput.},
  FJOURNAL = {Stochastic Partial Differential Equations. Analysis and
              Computations},
    VOLUME = {1},
      YEAR = {2013},
    NUMBER = {1},
     PAGES = {94--151},
      ISSN = {2194-0401},
   MRCLASS = {60H15 (35K45 35R60 60G60 60H07 60J45)},
  MRNUMBER = {3327503},
}

@article{dalang2010criteria,
AUTHOR = {Dalang, Robert C. and Sanz-Sol\'{e}, Marta},
     TITLE = {Criteria for hitting probabilities with applications to
              systems of stochastic wave equations},
   JOURNAL = {Bernoulli},
  FJOURNAL = {Bernoulli. Official Journal of the Bernoulli Society for
              Mathematical Statistics and Probability},
    VOLUME = {16},
      YEAR = {2010},
    NUMBER = {4},
     PAGES = {1343--1368},
      ISSN = {1350-7265},
   MRCLASS = {60H15 (28A78 35L15 35R60)},
  MRNUMBER = {2759182},
}

@article {dalang2015hitting,
    AUTHOR = {Dalang, Robert C. and Sanz-Sol\'{e}, Marta},
     TITLE = {Hitting probabilities for nonlinear systems of stochastic
              waves},
   JOURNAL = {Mem. Amer. Math. Soc.},
  FJOURNAL = {Memoirs of the American Mathematical Society},
    VOLUME = {237},
      YEAR = {2015},
    NUMBER = {1120},
     PAGES = {v+75},
      ISSN = {0065-9266},
      ISBN = {978-1-4704-1423-8; 978-1-4704-2507-4},
   MRCLASS = {60H15 (60G15 60G60 60H07 60J45)},
  MRNUMBER = {3401290},
}

@article {dalang2004potential,
    AUTHOR = {Dalang, Robert C. and Nualart, Eulalia},
     TITLE = {Potential theory for hyperbolic {SPDE}s},
   JOURNAL = {Ann. Probab.},
  FJOURNAL = {The Annals of Probability},
    VOLUME = {32},
      YEAR = {2004},
    NUMBER = {3A},
     PAGES = {2099--2148},
      ISSN = {0091-1798},
   MRCLASS = {60J45 (31C45 35R60 60G60 60H07 60H15)},
  MRNUMBER = {2073187},
}

\end{document}